\newtheorem{thm}{Theorem}[section]
\newtheorem{prop}[thm]{Proposition}
\newtheorem{cor}[thm]{Corollary}
\newtheorem{remark}[thm]{Remark}
\newtheorem{lemma}[thm]{Lemma}
\newtheorem{theorem}[thm]{Theorem}
\newtheorem{corollary}[thm]{Corollary}
\newtheorem{definition}[thm]{Definition}
\newtheorem{proposition}[thm]{Proposition}
\newtheorem{example}[thm]{Example}
\newtheorem{condition}[thm]{Condition}
\newcommand{\bR}{\mathbb{R}}
\newcommand{\bE}{\mathbb{E}}
\newcommand{\bP}{\mathbb{P}}
\newcommand{\al}{\alpha}
\newcommand{\de}{\delta}
\newcommand{\la}{\lambda}
\newcommand{\ep}{\varepsilon}
\newcommand{\goto}{\rightarrow}
\def\<{\left<}\def\>{\right>}
\def\({\left(}\def\){\right)}
\newcommand{\dd}{\mathrm{d}}
\def\blemma{\begin{lemma}\sl{}\def\elemma{\end{lemma}}}
\def\btheorem{\begin{theorem}\sl{}\def\etheorem{\end{theorem}}}
\def\bcorollary{\begin{corollary}\sl{}\def\ecorollary{\end{corollary}}}
\def\bproposition{\begin{proposition}\sl{}\def\eproposition{\end{proposition}}}
\def\bcondition{\begin{condition}\sl{}\def\econdition{\end{condition}}}
\def\bexample{\begin{example}\sl{}\def\eexample{\end{example}}}
\def\bremark{\begin{remark}\sl{}\def\eremark{\end{remark}}}
\def\beqlb{\begin{eqnarray}}\def\eeqlb{\end{eqnarray}}
\def\beqnn{\begin{eqnarray*}}\def\eeqnn{\end{eqnarray*}}
\def\qqquad{\qquad\qquad}
\def\qed{\hfill$\Box$\medskip}
\def\<{\langle}\def\>{\rangle}
\def\e{{\mbox{\rm e}}}
 \def\proof{\noindent{\it Proof.~~}}\def\qed{\hfill$\Box$\medskip}
 \def\<{\langle}\def\>{\rangle}
 \def\mcr{\mathscr}\def\mbb{\mathbb}\def\mbf{\mathbf}
 \def\ar{\!\!&}
 \def\red{\color{red}}
 \def\blue{\color{blue}}
\begin{document}

\begin{frontmatter}
\title{ A general continuous-state nonlinear branching process}
\runtitle{Nonlinear branching process}

\begin{aug}
\author{\fnms{Pei-Sen} \snm{Li}\thanksref{t1}\ead[label=e1]{peisenli@ruc.edu.cn}},
\author{\fnms{Xu} \snm{Yang}\thanksref{t2}\ead[label=e2]{xuyang@mail.bnu.edu.cn}}
\and
\author{\fnms{Xiaowen} \snm{Zhou}\thanksref{t3} \thanksref{t4}
\ead[label=e3]{xiaowen.zhou@concordia.ca}}

\thankstext{t1}{Supported by NSERC (RGPIN-2016-06704) and  NSFC (No. 11771046 and 11571043)}
\thankstext{t2}{Supported by NSFC (No. 11771018 and No. 11401012), NSF of Ningxia (No. 2018AAC03245) and First-Class Disciplines Foundation of Ningxia (No. NXYLXK2017B09)}
\thankstext{t3}{Supported by NSERC (RGPIN-2016-06704) and NSFC (No. 11731012)}
\thankstext{t4}{Corresponding author.}
\runauthor{Li, P.S., Yang, X. and Zhou, X.}

\affiliation{Renmin University of China\thanksmark{t1}, North Minzu University\thanksmark{t2} and Concordia University\thanksmark{t3}}
\address{Address of the First and Second authors
\\
\\
\\
\printead{e1}\\
\phantom{E-mail:\ }\printead*{e2}}

\address{Address of the Third author\\
\\
\\
\printead{e3}\\
}
\end{aug}

\begin{abstract}
In this paper we consider
%a general continuous-state nonlinear branching process which can be identified as
 the unique nonnegative  solution to the  following generalized version of the stochastic differential equation  for  a continuous-state branching process.
\beqnn
X_t
\ar=\ar
x+\int_0^t\gamma_0(X_s)\dd s+\int_0^t\int_0^{\gamma_1(X_{s-})}
W(\dd s,\dd u)\cr
\ar\ar\qquad+\int_0^t\int_{0}^\infty\int_0^{\gamma_2(X_{s-})} z\tilde{N}(\dd s, \dd z, \dd u),
\eeqnn
where $W(\dd t,\dd u) $ and $\tilde{N}(\dd s, \dd z, \dd u)$ denote a Gaussian white noise and  an independent compensated spectrally positive Poisson random measure, respectively,  and $\gamma_0,\gamma_1$ and $\gamma_2$ are functions on $\mbb{R}_+$ with both $\gamma_1$ and $\gamma_2$ taking nonnegative values.
 %{\red which is a SDE driven by both a Brownian motion and a spectrally positive L\'evy  random measure and with state-dependent coefficient functions}.
 Intuitively, this process can be identified as a continuous-state branching process with population-size-dependent branching rates and with competition. Using martingale techniques we find  rather sharp conditions on extinction, explosion  and coming down from infinity
 behaviors of the process.  Some Foster-Lyapunov type criteria are also developed for such a process.
 More explicit results are obtained when $\gamma_i, i=0, 1, 2$ are power functions.
 %As an application we  discuss the finiteness of weighted total population for the nonlinear branching processes.
 %Applications of the above results are presented.
\end{abstract}

\begin{keyword}[class=MSC]
\kwd[Primary ]{60G57}
\kwd[; secondary ]{60G17}\kwd{60J80}
\end{keyword}

\begin{keyword}
\kwd{continuous-state branching process}
\kwd{nonlinear branching}
\kwd{competition} \kwd{extinction} \kwd{explosion} \kwd{coming down from infinity} \kwd{weighted  total population} \kwd{Foster-Lyapunov  criterion} \kwd{stochastic differential equation}
\end{keyword}

\end{frontmatter}

\section{Introduction}
\setcounter{equation}{0}

\subsection{Continuous-state branching processes}

Suppose that $(\Omega,\mcr{F},\mcr{F}_t,\mbb{P})$ is a filtered probability
space satisfying the usual hypotheses.
Let $\bP_x$ be the law of a process
started at $x$, and denote by $\bE_x$ the associated expectation.
A continuous-state branching process  $X= (X_t)_{t\ge 0}$ is
a  c\`adl\`ag $[0,\infty]$-valued $(\mcr{F}_t)$-adapted process satisfying the branching property, i.e. for any $x, y\geq 0$ and $t, \theta \geq 0$,
\begin{equation}\label{branching_prop}
\bE_{x+y} \big[\e^{-\theta X_t}\big]=\bE_x \big[\e^{-\theta X_t}\big]\bE_y \big[\e^{-\theta X_t}\big].
\end{equation}
Consequently, its Laplace transform is determined by
 \beqnn
\bE_x \big[\e^{-\theta X_t}\big]=\e^{-xu_t(\theta)},
 \eeqnn
where the non-negative function $u_t(\theta)$ solves  the differential equation
 \beqnn
\frac{\partial u_t(\theta)}{\partial t}+\psi(u_t(\theta))=0\
 \eeqnn
with initial value $u_0(\theta)=\theta\ge0$ and Laplace exponent
 \beqnn
\psi(\la)=b\la+\frac{1}{2}\sigma^2\la^2+\int_0^\infty (\e^{-\la x}-1+\la x)\pi(\dd x)
 \eeqnn
for $b\in \bR,\sigma\geq 0$ and for $\sigma$-finite measure
$\pi$ on $(0, \infty)$ satisfying $\int_0^\infty (z\wedge
z^2)\pi(\dd z)<\infty $.

Via the Lamperti random time change the continuous-state branching process is associated to a spectrally positive L\'evy process, which allows many semi-explicit expressions. In particular, extinction and explosion behaviors for continuous-state branching processes were studied by Grey (1974) and Kawazu and
Watanabe (1971), respectively, and  the conditions for extinction and explosion were expressed in terms of the respective integral tests on the function $\psi$.

Bertoin and Le Gall (2006) and Dawson and Li (2006, 2012) noticed the following
alternative way of characterizing continuous-state branching processes  through
stochastic differential equations (SDEs in short).
Let $\{W(\dd t,\dd u): t,u\ge 0\}$ denote an $(\mcr{F}_t)$-Gaussian white noise
with density measure $\dd t\dd u$ on $(0,\infty)^2 $.
In this paper we always write $\pi\neq0$ for a $\sigma$-finite measure on $(0, \infty)$.
Let $\{N(\dd t,\dd z,\dd u):t,z,u>0\}$ denote an independent $(\mcr{F}_t)$-Poisson
random measure with intensity measure $\dd t\pi(\dd z)\dd u$ on $(0,\infty)^3$ and  let
$\{\tilde{N}(\dd t,\dd z,\dd u):t,z,u>0\}$ denote the corresponding compensated measure.
Then the continuous-state branching process is
a pathwise unique nonnegative solution to the following SDE that is called a Dawson-Li SDE in Pardoux (2016):
 \beqlb\label{sdeA}
X_t
\ar=\ar
x+b\int_0^t X_s\dd s+\sigma\int_0^t\int_0^{X_{s-}}
W(\dd s,\dd u)\cr
\ar\ar\qquad+\int_0^t\int_0^\infty\int_0^{X_{s-}} z\tilde{N}(\dd s, \dd z, \dd u).
 \eeqlb
SDEs similar to (\ref{sdeA}) were studied by Dawson and Li (2006, 2012) and by
Fu and Li (2010). See also Bertoin and Le Gall (2003, 2005) for related work.

We refer to Kyprianou (2006),
Li (2011, 2012) and Pardoux (2016) for reviews and literature on continuous-state branching processes.

\subsection{Continuous-state  branching processes with nonlinear branching}\label{nonlinear}

Models with interactions have  gained interests in the study of branching processes. Athreya and Ney (1972) introduced
 population-size-dependent Galton-Watson branching processes in which the reproduction mechanism depends on the population size; see also
 Klebaner (1984) and H\"opfner (1985) for previous work on population-size-dependent Galton-Watson processes. Another class of interacting Galton-Watson processes is the so called controlled branching processes. For a controlled branching process, the reproduction law is fixed. But before each branching time the population is regulated by a control function.
 Previous work on controlled branching processes can be found in Sevast'yanov and Zubkov (1974) and references therein.  A discrete state, continuous time branching process with population dependent branching rate can be found in  Chen (1997).
 When the branching rate function  is a power function of the population, the extinction probability for such a branching process  was obtained in Chen (2002).  When the branching rate is a general positive nonlinear function, such a model called nonlinear Markov branching process was studied in Pakes (2007).

The previous work on discrete-state interacting branching processes motivates the study of their continuous-state counterparts. Some population-size-dependent continuous-state branching processes arising as scaling limits of the corresponding discrete-state branching processes can be found in Li (2006, 2009).

In this paper we introduce a class of continuous-state branching processes whose branching rates depend on their current population sizes. To this end, we  consider a nonnegative solution to the following modification of SDE (\ref{sdeA}):
\beqlb\label{sdeB}
X_t
\ar=\ar
x+\int_0^t\gamma_0(X_s)\dd s+\int_0^t\int_0^{\gamma_1(X_{s-})}
W(\dd s,\dd u)\cr
\ar\ar\qquad+\int_0^t\int_{0}^\infty\int_0^{\gamma_2(X_{s-})} z\tilde{N}(\dd s, \dd z, \dd u),
\eeqlb
where $\gamma_0,\gamma_1$ and $\gamma_2$ are Borel functions on $\mbb{R}_+$, and both $\gamma_1$ and $\gamma_2$ take nonnegative values.
The unique nonnegative solution to (\ref{sdeB}) up to the minimum of its first time of hitting $0$ and its explosion time can be treated as a continuous-state nonlinear branching process,  where $\gamma_i(x)/x, i=1,2$ can be interpreted as population-size-dependent branching rates and the drift term involving $\gamma_0$ can be related to either competition or population-size-dependent continuous immigration. We refer to Duhalde \textit{et al.} (2014) for work on continuous-state branching processes with immigration.
If $\gamma_i(x)=c_i x$ for $c_1,c_2\ge0,$ then the solution to (\ref{sdeB}) reduces to the classical continuous-state branching process and satisfies the branching property (\ref{branching_prop}).
Observe that the solution $X$ to (\ref{sdeB}) can also be treated as a continuous-state controlled branching process.
%Connection with discrete-state controlled branching processes???}

%{\color{red} meaning of $\gamma_i$. $\gamma_0$ is related to model with competition }

For  $\gamma_2\equiv 0,  \gamma_1(z)=z$ and $\gamma_0$ satisfying certain conditions, the SDE
(\ref{sdeB}) was studied in Pardoux and Wakolbinger (2015) and in Pardoux (2016) where
%In the terminology of a (pre-limiting) individual-based description,
the function $\gamma_0$ models an impact of the current population
size on the individuals' reproduction dynamics.
%and in this way goes along with an interaction in the individuals' reproductive behavior.
If the interaction is of the type of competition for
rare resources, then  increasing  the population size results in a reduction of the individuals' birth
rate and/or increment of the death rate.
%This means that $\gamma_0(z)/z$ should be decreasing, and $\gamma_0(z)$ should be negative for $z$ large enough.
%On the other hand, specifically for
%moderate values of $z$, $\gamma_0(z)/z$ might be increasing.
%This is the case in the presence of the so-called Allee effect,
%where there is a negative growth rate for small population
%sizes $z$ and a positive growth rate for larger population sizes (as long as the
%population size does not exceed a certain carrying capacity).

For $ \gamma_1(z)=\gamma_2(z)=z$ and $\gamma_0(z)=\theta z-\gamma z^2$
with positive constants $\theta$ and $\gamma$,
solution  to SDE \eqref{sdeB} can be used to model the density dependence in population dynamics of
a large population with competition called logistic branching process,
and it was studied in detail by Lambert (2005).
The quadratic regulatory term has an ecological interpretation  as
it describes negative interactions between each pair of individuals in the population.
The extinction behavior and the probability distribution of the extinction time were considered in Lambert (2005).
A similar model with more general function $\gamma_0 $  was considered in  Le \textit{et al.} (2013) with its first passage times studied. The total mass for this model was also studied using the Lamperti transform.  Berestycki et al. (2017) gave a genealogical description for the process based on interactive pruning of L\'{e}vy--trees, and established a Ray--Knight representation result.

For $\gamma_0(z)=\gamma_2(z)\equiv0$,  the extinction/survival behaviors for process $X$ as the total mass process of a superprocess with mean field interaction were discussed in Wang \textit{et al.} (2017) by a martingale approach.  More generally, for $\gamma_2(z)\equiv0$ the extinction, explosion and coming down from infinity behaviors for diffusion process $X$ are associated to    the classification of its boundaries at $0$ and $\infty$, respectively;  see Karlin and Taylor (1981, p. 229).

For $\gamma_i(z)=c_iz^r$
with $r>0$, $c_0\in\mbb{R}$ and $c_i\ge0$ for $i=1,2$, the solution to SDE \eqref{sdeB},
 called a continuous-state polynomial branching process, was studied by Li (2018),
 where the parameter $r$ describes the degree of interaction. The polynomial branching process also arises as time-space scaling limit of discrete-state nonlinear branching processes.  Intuitively, functions  $\gamma_1$ and $\gamma_2$ are population-dependent rates for branching events producing small and large amount of children, respectively. By solving the corresponding Kolmogorov equations,  necessary and sufficient conditions in terms of integral tests were obtained for extinction, explosion and coming down from infinity, respectively. Expectations of the extinction time and explosion time were also discussed in Li (2018), which generalizes those results in Chen (2002) for discrete-state processes to the corresponding continuous-state processes.
 The nonlinear branching processes considered in this paper generalize those in Li (2018) by allowing different rates for different branching events.

%{\red The discrete time state and time branching processes with population dependent branching mechanisms have been studied before; see for example \cite{Chen97} %and \cite{SZ74} references therein.
%}

Note that if $\tilde{N}$ is the compensated measure of a
one-sided $\alpha$-stable random measure with $\alpha\in(1,2)$,
i.e.,
 \beqlb\label{pi}
\pi(\dd z)=\frac{\alpha(\alpha-1)}{\Gamma(2-\alpha)}
1_{\{z>0\}}z^{-1-\alpha}\dd z
 \eeqlb
for  Gamma function $\Gamma$, then on an enlarged probability space,
SDE \eqref{sdeB} can be transformed
into the following SDE:
 \beqlb\label{sdeD}
X_t\ar=\ar x+\int_0^t\gamma_0(X_s)\dd s+\int_0^t
\sqrt{\gamma_1(X_{s-})}\dd B_s\cr
\ar\ar\qquad+\int_0^t \gamma_2(X_{s-})^{1/\alpha}
\int_{0}^\infty u\tilde{M}(\dd s, \dd u),
 \eeqlb
where $\{B_t:t\ge0\}$ is a Brownian motion and $\{\tilde{M}(\dd t, \dd u):t,u\ge0\}$ is an independent
compensated Poisson random measure with intensity measure $\dd t\pi(\dd u)$;
see Theorem 9.32 in Li (2011) for a similar result.
Equation (\ref{sdeD}) has a pathwise unique non-negative strong solution
if $\gamma_0(z)=a_1z+a_2$, $\gamma_1(z)=z^{r_1}$ and
$\gamma_1(z)=z^{r_2}$ for $a_1\in\mbb{R},a_2\ge0$,
$r_1\in[1/2,1]$ and $r_2\in(\alpha-1,\alpha]$; see Corollary 4.3 in Li and Mytnik (2011).
By Theorem 4.1.2 in Li (2012) one can also convert (\ref{sdeB}) to another SDE:
\beqnn
X_t=x+\int_0^t\gamma_0(X_s)\dd s
+\int_0^t \sqrt{\gamma_1(X_{s-})}\dd B_s
+\int_0^t\int_{0}^\infty u\tilde{M}_{\gamma_2}(\dd s, \dd u),
\eeqnn
where $\tilde{M}_{\gamma_2}(\dd s, \dd u)$ is  an optional
compensated Poisson  measure with predictable compensator $\gamma_2(X_{s-})\dd s\pi(\dd u)$.

%{\red Yang Xu: I think there are errors in this paragraph. What is $f$? Could you fix them and summarize Theorem 1.1 in this paper?}
Using the Lamperti transform for positive self-similar Markov processes,
Berestycki \textit{et al.} (2015) found the extinction condition of solution to \eqref{sdeD}
for
$$\gamma_0(z)=\theta z^{\eta}f(z), \gamma_1(z)\equiv0, \gamma_2(z)=z^{\alpha\beta}\,\text{  and} \, \pi(\dd z)=\frac{\alpha(\alpha-1)}{\Gamma(2-\alpha)}
1_{\{z>0\}} z^{-1-\alpha}\dd z$$
with $\alpha\in (1,2), \theta\geq 0, \beta\in [1-1/\alpha, 1),  \eta=1-\alpha(1-\beta)\in[0,1)$ and for certain nonnegative Lipschitz continuous function $f$.
In particular, for $f\equiv 1$ it is shown that the extinction occurs within finite time with probability one for $0\leq \theta< \Gamma(\alpha)$ and with probability $0$ for $\theta \geq \Gamma(\alpha)$; see Theorems 1.1 and 1.4 of Berestycki \textit{et al.} (2015).

%{\red There could be more results on explosion-nonexplosion for SED with jumps. }

We refer to Lambert (2005), Berestycki \textit{et al.} (2010), Bansaye \textit{et al.} (2015) and Li (2018) for previous studies  of coming down from infinity for a branching process with logistic growth, coalescents, birth and death processes and the polynomial branching process, respectively.

 Other than the above mentioned results, we are not aware of any previous results on hitting probability and coming down from infinity for solutions to SDEs of type (\ref{sdeB}). There is some  literature on nonexplosion of solutions to general SDE with jumps; see Dong (2016) for a recent result. But we do not find any systematic discussions on the explosion/nonexplosion dichotomy and the coming down from infinity property of the solutions.

%{\blue Check carefully if there are any known results on boundary conditions for SDE with jumps??}

The main purpose of this paper is to
investigate  the extinction, explosion and coming down from infinity  behaviors of the continuous-state nonlinear branching process as solution to (\ref{sdeB})
and specify the associated conditions on functions
$\gamma_i$, $i=0, 1, 2$.

For lack of negative jumps, the extinction behaviors depend on the asymptotic behaviors of function $\gamma_i(x)$ as $x\goto 0+$.
%We only consider the the situation that $\gamma_i(x)\goto 0+$ as $x\goto 0+$ for $i=0, 1, 2$ which is more subtle.
Intuitively, extinction can either be caused by a large enough negative drift due to $\gamma_0$ or large enough fluctuations due to $\gamma_1$ or $\gamma_2$. Even when the process has a (small) positive drift near $0$, it might still die out because of relative large fluctuations.

We are also interested in the relations between the asymptotes of functions $\gamma_i(x), i=0, 1, 2$ as $x\goto\infty$ and the explosion  and coming down from infinity behaviors of the nonlinear branching processes as solutions to SDEs (\ref{sdeB}).

When $\gamma_i$, $i=0, 1, 2$ are not power functions with the same power, the  approach of Li (2018)  fails to work.
To overcome this difficulty we adopt an alternative martingale  approach  that appeared earlier in Wang \textit{et al.} (2017).  Such an approach typically involves understanding how the process exits from consecutive intervals near $0$ with the interval lengths decreasing geometrically,  or consecutive intervals near $\infty$ with the interval lengths  increasing geometrically.   To this end,
we construct the corresponding martingale in each situation. These martingales allow  to obtain estimates on both the sequential exit probabilities and sequential exit times via optional stopping, where the lack of negative jumps for process $X$ comes in handy. The desired results then follow
from Borel-Cantelli type arguments.
 Although we focus on SDEs of type (\ref{sdeB}),  we expect that this approach could also adapted to study  similar properties of solutions to other SDEs with more general jump mechanism, and it remains to be checked how sharp the desired results can be.

%On the other hand, we obtain some practical conditions on
%functions $\gamma_i$ $(i=0, 1, 2)$ by

 In addition, we show that the general nonlinear branching processes considered in this paper  are closed under
  a Lamperti type transform, which allows us to discuss the finiteness of a weighted occupation time until extinction or explosion of the continuous-state nonlinear branching process via considering the extinction or explosion behaviors of the time changed process.

We also find Foster-Lyapunov type
criteria  to show the irreducibility of  the nonlinear continuous-state branching processes, which is of
independent interest.  We refer to Chen (2004) and
Meyn and Tweedie (1993) for the Foster-Lyapunov type criteria
for explosion and stability of Markov chains.

This paper is structured as follows. After introductions in Subsections 1.1 and 1.2 on the continuous-state branching processes,  Section \ref{main_result} summarizes the main results of this paper with an application and examples, where our results are compared with the known results.  In Section \ref{main} we show that SDE \eqref{sdeB} has a unique strong solution up to the first time of reaching $0$ or explosion given that the  functions $\gamma_i, \, i=0,1,2$ are locally Lipschitz on $(0, \infty)$. Section \ref{Foster_Lyapunov} contains Foster-Lyapunov criteria type results that can be used to show the irreducibility of  the solution as a Markov process.
Proofs of the main results in Section \ref{main_result} are included in Section \ref{proofs}.

\section{Extinction, explosion and coming down from infinity}\label{main_result}
\setcounter{equation}{0}

With the convention $\inf\emptyset:=\infty$, for $y>0$ define
 \beqnn
\tau_y^-\equiv \tau^-(y):=\inf\{t> 0: X_t<y\}, ~~
\tau_y^+\equiv \tau^+(y):=\inf\{t>0: X_t>y\}
 \eeqnn
and
 \beqnn\tau_0^-:=\inf\{t> 0: X_t=0 \}.\eeqnn
By  a solution to SDE  (\ref{sdeB}) we mean  a c\`adl\`ag process
$X=(X_t)_{t\ge0}$ satisfying (\ref{sdeB}) up to time $\tau_n := \tau^-_{1/n}\wedge\tau^+_{n}$ for each $n\ge 1$ and
$X_t = \limsup_{n\to \infty}X_{\tau_n-}$ for $t\ge \tau:= \lim_{n\to \infty} \tau_n$. Then  both of the boundary points $0$ and $\infty$ are absorbing for $X$ by definition.

Throughout this subsection we assume that SDE (\ref{sdeB}) allows a unique weak solution denoted by $X:=(X_t)_{t\ge0}$, and consequently the process $X$ has the strong Markov property. In Theorem \ref{thm00} we are going to show that (\ref{sdeB}) allows a pathwise unique solution if the coefficient functions $\gamma_i, i=0,1,2$ are all locally Lipschitz.
We also assume that either $\gamma_1\not\equiv0$ or $\gamma_2\not\equiv0$
and that the functions $\gamma_0$, $\gamma_1 $ and $\gamma_2$ are all
locally bounded on $[0, \infty)$.

%In this paper we always assume that the process is a solution to \eqref{sdeB}.
 In the following we present our main results on extinction, explosion and coming down from infinity properties of process $X$. Most of the proofs are deferred to Section \ref{proofs}.

   For $a>0$ and $u>0$, let
 \begin{equation}\label{1.6}
 \begin{split}
H_a(u)&:=\int_{0}^\infty\big[(1+zu^{-1})^{1-a}-1-(1-a)z u^{-1}\big]\pi(\dd z)\\
&=a(a-1)u^{-2}\int_0^\infty z^2\pi(\dd z) \int_0^1 (1+zu^{-1}v)^{-1-a}(1-v)\dd v,
 \end{split}
 \end{equation}
where we use the following form of Taylor's formula that is often needed in the proofs of this paper; see e.g. Zorich (2004, p.364) for its proof.

\blemma\label{t4.1}
If function $g$ has a bounded continuous second derivative on $[0, \infty)$, then for any $y, z>0$ we have
 \beqnn
g(y+z)-g(y)-zg'(y)=z^2\int_0^1 g''(y+zv)(1-v)\dd v.
 \eeqnn
\elemma

Note that for $\pi(\dd z)=cz^{-1-\alpha}$ with $\alpha\in (1, 2)$ and $c>0$,
 \beqlb\label{H_a}
H_a(u)=a(a-1)u^{-\alpha}\int_0^\infty cy^{1-\alpha}\dd y\int_0^1 (1+yv)^{ -1-a}(1-v)\dd v.
 \eeqlb
Put
 \beqlb\label{1.7}
G_a(u)
 \ar:=\ar
(a-1)\gamma_0(u)u^{-1}-2^{-1}a(a-1)u^{-2}\gamma_1(u)
-\gamma_2(u)H_a(u).
 \eeqlb

 We choose the function $G_a$  to be of the particular form in (\ref{1.7}) so that, by Ito's formula,  the process  constructed in Lemma \ref{t3.3} can be shown to be a martingale, which is key  for the main proofs in Section \ref{proofs}. The martingale allows to obtain estimates on  exits times of the  processes $X$ via optional stopping. The conditions for extinction, explosion and coming down from infinity for the process $X$ can be identified from the asymptotic behaviors of $G_a(u)$ for $u$ near $0$ or  near $\infty$.  An earlier version of $G_a$ can be found in Wang \textit{et al.} (2017) where it was also used to construct a continuous martingale to study the extinction behavior for the interacting super-Brownian motion.

\bremark
Suppose that $\pi\neq0$ and $u\in(0,c)$ for some constant $c>0$. One can see that
\begin{itemize}
\item
If there exists a constant $\alpha\in(1,2)$ so that
\beqnn
\sup_{0<y<c}y^{\alpha-2}\int_0^y z^2 \pi(\dd z)\le b,\,\,~~
\sup_{0<y<c}y^{\alpha-1}\int_y^\infty z \pi(\dd z)\le b,\eeqnn
then
 \beqnn
H_a(u)\le \frac{(a-1)(a+2)}{2}bu^{-\al}\mbox{ for }\quad a>1
 \eeqnn
\item
If there exists a constant $\alpha\in(1,2)$ so that
 \beqnn
 \inf_{0<y<c}y^{\alpha-2}\int_0^y z^2 \pi(\dd z)\ge b',
 \eeqnn
then
 \beqnn
-H_a(u)\ge \frac{a(1-a)}{2}b'u^{-\al}
\mbox{ for }\quad 0<a<1.
 \eeqnn
\end{itemize}
\eremark

%{\blue Replace $a$ with $a-1$ throughout the paper to simplify the expression??}

\subsection{Extinction behaviors}

We first present the two main results on  the extinction behaviors for $X$.  Here we only consider the case that the initial value of $X$ is small. In this way we only have to impose conditions on function $G(u)$ for small positive values of $u$. These results, combined with Foster-Lyapunov criteria (Lemmas \ref{lemA} and \ref{lem1}), can be used to discuss the extinction behaviors for $X$ with arbitrary initial value.

%{\red Combining the following two theorems??}
\btheorem\label{t2.5}
\begin{itemize}
\item[{\normalfont(i)}]
Suppose that there exist constants $a>1$ and $r<1$ so that
$G_a(u)\geq -(\ln u^{-1})^{r}$
for all small enough $u>0$.
Then we have $\bP_x\{\tau^-_0<\infty \}=0$ for all  $x>0$.   %Consequently, $\bP_x\{\tau^-_0<\infty \}=0$ for all  $x>0$.
%\etheorem
\item[{\normalfont(ii)}]
%\btheorem\label{t2.6}
Suppose that there exist constants $0<a<1$ and $r>1$ so that
$G_a(u)\geq
(\ln u^{-1})^{r}$ for all small enough $u>0$. Then
$\bP_x\{\tau^-_0<\infty\}> 0$ for all small enough $x>0$.
\end{itemize}
\etheorem

Proof of Theorem \ref{t2.5} is deferred to Section \ref{proofs}.

The next results concern the first passage probabilities for which we  need the following condition.

\bcondition\label{Condition}
\begin{itemize}
\item[{\normalfont(i)}]
For any  $x$ and $a$ with  $x>a>0$,
 \beqlb\label{Condition_a}
\bP_x\{\tau^-_a<\infty \}>0.
 \eeqlb
\item[{\normalfont(ii)}]
For any  $x$ and $a$ with  $x>a>0$,
 \beqlb\label{2.1}
\bP_x\{\tau^-_a<\infty \}=1.
 \eeqlb
\end{itemize}
\econdition

Proof of the next corollary is deferred to Section \ref{proofs}.

\begin{cor}\label{t1.1}
Suppose that the assumption of Theorem \ref{t2.5} (ii) holds.
Then
\begin{itemize}
\item
$\bP_x\{\tau^-_0<\infty\}> 0$ for all $x>0$ if Condition
\ref{Condition} (i) holds;
\item
$\bP_x\{\tau^-_0<\infty\}=1 $ for all $x>0$ if Condition
\ref{Condition} (ii) holds.
\end{itemize}
\end{cor}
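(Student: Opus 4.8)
The plan is to set $h(x):=\bP_x\{\tau^-_0<\infty\}$ and to run both bullets off the strong Markov property together with the absence of negative jumps, which forces $X$ to cross any level downward continuously, so that $X_{\tau^-_a}=a$ on $\{\tau^-_a<\infty\}$. For the first bullet, fix $x>0$ and choose $a$ with $0<a<\min(x,\delta)$, where $\delta>0$ is the threshold supplied by Theorem \ref{t2.5}(ii), so that $h(a)>0$. Since reaching $0$ forces an earlier downcrossing of $a$ we have $\tau^-_a\le\tau^-_0$ and hence $\{\tau^-_0<\infty\}\subseteq\{\tau^-_a<\infty\}$; conditioning on $\mcr{F}_{\tau^-_a}$ and using the strong Markov property gives
\[
h(x)=\bE_x\big[\mathbf{1}_{\{\tau^-_a<\infty\}}\,h(X_{\tau^-_a})\big]=h(a)\,\bP_x\{\tau^-_a<\infty\}.
\]
By Condition \ref{Condition}(i) the last probability is strictly positive and $h(a)>0$, so $h(x)>0$; this is all of part (i).

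For the second bullet the only additional content is upgrading positivity to probability one, and I would first observe that the displayed identity becomes sharper under Condition \ref{Condition}(ii): now $\bP_x\{\tau^-_a<\infty\}=1$ for every $x>a>0$, so the identity reads $h(x)=h(a)$. Letting $x$ and $a$ range freely, this shows $h$ is \emph{constant} on $(0,\infty)$, say $h\equiv p_0$; choosing $a<\delta$ identifies the constant as $p_0=h(a)>0$.

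The final step converts $p_0>0$ into $p_0=1$ by an optional-stopping argument. Setting $h(0)=1$ (consistent, as $0$ is absorbing), the random variable $\mathbf{1}_{\{\tau^-_0<\infty\}}$ is integrable, so $M_t:=\bP_x\{\tau^-_0<\infty\mid\mcr{F}_t\}$ is a bounded martingale which the Markov property identifies with $M_t=h(X_{t\wedge\tau^-_0})$. Being bounded it is uniformly integrable, hence $M_t\to M_\infty$ with $\bE_x M_\infty=M_0=h(x)$. On $\{\tau^-_0<\infty\}$ the process is eventually absorbed at $0$, so $M_\infty=h(0)=1$, whereas on $\{\tau^-_0=\infty\}$ it remains in $(0,\infty)$, so $M_\infty=p_0$. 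Therefore
\[
h(x)=\bE_x M_\infty=\bP_x\{\tau^-_0<\infty\}+p_0\,\bP_x\{\tau^-_0=\infty\}=h(x)+p_0\big(1-h(x)\big),
\]
which forces $p_0\big(1-h(x)\big)=0$; as $p_0>0$, we conclude $h(x)=1$ for all $x>0$, proving part (ii).

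The one genuinely delicate point is the passage from positivity to one, since Theorem \ref{t2.5}(ii) is only a one-sided statement; the work is done by the constancy identity $h\equiv p_0$, which uses Condition \ref{Condition}(ii) precisely to prevent $h(X_t)$ from decaying to $0$ along the paths that avoid $0$. Once that is established the optional-stopping computation is routine. I note that the martingale step can be replaced by an equivalent renewal/Borel-Cantelli argument: because $\bP_x\{\tau^-_a<\infty\}=1$ and each such downcrossing lands exactly at $a<\delta$, the process returns to $a$ repeatedly, and each return offers a chance at least $p_0$ of subsequently hitting $0$, so the probability of never hitting $0$ is at most $(1-p_0)^n\to 0$; I would nonetheless favour the martingale formulation, as it avoids the bookkeeping needed to define the successive trials rigorously.
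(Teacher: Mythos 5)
Your first bullet is correct: the identity $h(x)=h(a)\,\bP_x\{\tau^-_a<\infty\}$ for $h(x):=\bP_x\{\tau^-_0<\infty\}$, resting on the strong Markov property and on $X_{\tau^-_a}=a$ (absence of negative jumps), is exactly the intended argument, and under Condition \ref{Condition}(i) it yields positivity. Likewise, under Condition \ref{Condition}(ii) this identity does give the constancy $h\equiv p_0>0$ on $(0,\infty)$. The problem is the final step of your second bullet.

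There you assert that on $\{\tau^-_0=\infty\}$ the process ``remains in $(0,\infty)$'', so that $M_\infty=p_0$ there. This ignores explosion: the state space is $[0,\infty]$ and \emph{both} $0$ and $\infty$ are absorbing (see the definition of a solution at the beginning of Section \ref{main_result}), and $\tau^+_\infty<\infty$ is a genuine possibility for these processes (Theorem \ref{t3.1}(ii)). On $\{\tau^-_0=\infty\}\cap\{\tau^+_\infty<\infty\}$ the path is absorbed at $\infty$, so $M_t=\bP_x\{\tau^-_0<\infty\mid\mcr{F}_t\}=0$ from the absorption time onward and $M_\infty=0$, not $p_0$. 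Correcting your computation gives only
$p_0=p_0+p_0\,\bP_x\{\tau^-_0=\infty,\,\tau^+_\infty=\infty\}$, i.e.\ that extinction \emph{or explosion} occurs a.s.; it does not yield $h(x)=1$. Nor does Condition \ref{Condition}(ii) trivially exclude explosion: it only forces $\tau^-_a\le\tau^+_\infty$ a.s.\ for each fixed $a$, leaving open the scenario in which the path passes below every level and then explodes; ruling this out needs a further argument (for instance, that a right-continuous strong Markov process without negative jumps cannot make infinitely many downcrossings of a fixed interval in finite time). The paper's route avoids the issue altogether: the proof of Theorem \ref{t2.5}(ii) actually establishes the quantitative estimate \eqref{2.12}, $\bP_\ep\{\tau^-_0<\infty\}\ge\e^{-8\ep^{(1-a)\de}}$, whose right-hand side tends to $1$ as $\ep\goto 0+$; combined with $h(x)\ge\bP_x\{\tau^-_\ep<\infty\}\,h(\ep)=h(\ep)$ under Condition \ref{Condition}(ii), letting $\ep\goto0$ gives $h(x)=1$ directly, with no discussion of explosion required. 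Your purely qualitative use of Theorem \ref{t2.5}(ii) is what forces you into the martingale argument and hence into this gap. Finally, the Borel--Cantelli variant you sketch is unsound for a separate reason: with no negative jumps, $0$ cannot be hit between successive first passages of the levels $a$ and $a/2$ (the path must cross every lower level first), so the proposed independent trials with success probability $p_0$ on disjoint time stretches cannot be constructed.
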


For $a\le b$ define
 \beqnn
\Phi(a,b):=
\inf_{y\in[a,b]} \gamma_1(y) +
\inf_{y\in[a,b]} \gamma_2(y ) 1_{\{\int_0^1z\pi(\dd z)=\infty\}}.
 \eeqnn
We can show that (i) or (ii) of Condition \ref{Condition}  hold under certain conditions on
$\gamma_i, \,\, i=0,1,2$.

\bproposition\label{thmA}
\begin{itemize}
\item[{\normalfont(i)}]
Given $x>a>0$, (\ref{Condition_a}) holds if
$\Phi(a,b)>0$ and $\sup_{a\le y\le b}\gamma_0(y)<\infty$ for all $b>a$.
\item[{\normalfont(ii)}]
Given $x>a>0$, suppose that $\Phi(a,b)>0$ for all $b>a$ and that
$\gamma_0(y)\leq 0$ for all large enough $y$.
Then \eqref{2.1} holds.
\item[{\normalfont(iii)}]
If $\gamma_0(a)\leq 0$ and $\Phi(a,b)>0$ for all  $b\ge a>0$,
then for each $x>0$, $\bP_x$-a.s. \, $X_t\rightarrow 0$ as $t\rightarrow\infty$.   Further, by the strong Markov property  either
\[\bP_x\{X_t=0 \,\,\text{ for all $t$ large enough} \}=1\]
or
\[\bP_x\{X_t\rightarrow 0, \,\, \text{but}\,\, X_t>0 \,\,\text{ for all $t$} \}=1,\]
and we say extinguishing occurs in the latter case.
\end{itemize}
\eproposition

Proof of Proposition \ref{thmA} is deferred to Section \ref{Foster_Lyapunov} after Lemma \ref{lem1}.

\bremark
\begin{itemize}
\item[{\normalfont(i})]
 Combining Proposition \ref{thmA} and Theorem \ref{t2.5} (ii) we find conditions for extinction with probability one and extinguishing with probability one, respectively.
\item[{\normalfont(ii)}]
If $\gamma_0=\gamma_2\equiv0$, then
the process $X$ is the total mass of an interacting super-Brownian motion
and  Theorem \ref{t2.5} generalizes Theorems 3.4 and 3.5 of Wang \textit{et al.} (2017).
\end{itemize}
\eremark

\subsection{Explosion  behaviors}

Let $\tau^+_\infty:=\lim_{n\goto\infty}\tau^+_n$ be the explosion time. The solution $X$ to SDE \eqref{sdeB} explodes at a finite time if $\tau^+_\infty<\infty$.
We now present results on the explosion behaviors for $X$ in the following,  and again, we only consider the case of large initial values.

%{\red Combining the following two theorems??}
\begin{thm}\label{t3.1}
\begin{itemize}
\item[{\normalfont(i)}]
If there exist constants $0<a<1$ and $r<1$ so that $G_a(u)\geq -(\ln u)^{r}$ for all $u$ large enough, then
$\bP_x\{\tau^+_\infty<\infty\}=0$  for all $x>0$.
%\end{thm}
\item[{\normalfont(ii)}]
%\begin{thm}\label{t3.2}
If there exist $a>1$ and $r>1$ so that $G_a(u)\geq (\ln u)^r$ for all $u$ large enough, then $\bP_x\{\tau^+_\infty<\infty\}>0$ for all large $x$.
\end{itemize}
\end{thm}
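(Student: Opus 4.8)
The plan is to run, near the boundary $\infty$, the same martingale machinery that underlies Theorem~\ref{t2.5}, exploiting the duality between the behaviours at $0$ and at $\infty$. The starting point is the generator identity produced by It\^o's formula and Lemma~\ref{t4.1}: for the solution of \eqref{sdeB} one has
\beqnn
\L u^{1-a}=-u^{1-a}G_a(u),
\eeqnn
so that (after suitable stopping) $M_t:=X_{t}^{1-a}\exp\big(\int_0^{t}G_a(X_s)\dd s\big)$ is a nonnegative local martingale and $N_t:=X_t^{1-a}+\int_0^t X_s^{1-a}G_a(X_s)\dd s$ is a local martingale, precisely as in Lemma~\ref{t3.3}. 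The sign of $1-a$ is decisive: for $0<a<1$ the test function $u^{1-a}$ increases to $\infty$ as $u\goto\infty$, acting as a barrier at $\infty$, whereas for $a>1$ it decreases to $0$, so that a strong negative drift on $u^{1-a}$ forces $X$ upward. This is exactly why part~(i) is governed by $0<a<1$ and part~(ii) by $a>1$, in mirror image to Theorem~\ref{t2.5}.

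For part~(i) I would fix $0<a<1$. The hypothesis $G_a(u)\ge-(\ln u)^r$ gives $\L V\le cV(\ln V)^r$ for large $u$ with $V(u)=u^{1-a}\goto\infty$, and $\int^{\infty}\dd V/\big(V(\ln V)^r\big)=\infty$ exactly when $r<1$; this is the Khasminskii non-explosion test. To realize it within the present framework I would take geometric levels $v_k=\rho^{k}$ ($\rho>1$) and apply optional stopping to $M_t$ at the first-passage times $\tau^+(v_k)$ on each block $(v_{k-1},v_{k+1})$, using $r<1$ to make the resulting exit probabilities summable. A Borel--Cantelli argument over the blocks then shows that the blocks cannot all be traversed in finite total time, i.e. $\bP_x\{\tau^+_\infty<\infty\}=0$ for every $x>0$.

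For part~(ii) I would fix $a>1$ and the threshold $u_0$ beyond which $G_a(u)\ge(\ln u)^r$, take geometric levels $v_k=\rho^k>u_0$, and study the embedded level process through first-passage and return times via the strong Markov property. Two optional-stopping estimates drive the argument. First, for the exit time $\sigma:=\tau^+(v_{k+1})\wedge\tau^-(v_{k-1})$ of the block started at $v_k$, the factor $\exp(\int_0^{\cdot}G_a)\ge1$ on the block gives the supermartingale bound $\bE_{v_k}[X_\sigma^{1-a}]\le v_k^{1-a}$; since $X$ can leave only through the lower endpoint continuously (spectral positivity), this yields $\bP_{v_k}\{\tau^+(v_{k+1})<\tau^-(v_{k-1})\}\ge 1-\rho^{1-a}$. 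Choosing $\rho>2^{1/(a-1)}$ makes the embedded level walk transient to $+\infty$, and a gambler's-ruin estimate shows that from a large initial value $x$ the process climbs to $\infty$ before returning below $u_0$ with positive probability, tending to $1$ as $x\goto\infty$. Second, optional stopping for $N_t$ on the same block, using that $u^{1-a}$ is decreasing and $G_a(u)\ge(\ln v_{k-1})^r\asymp k^r$, gives $\bE_{v_k}[\sigma]\le C\,k^{-r}$; since $r>1$ and each level is visited only finitely often under the uniform upward bias, the total time to reach $\infty$ is finite almost surely on the climbing event. Combining the two estimates yields $\bP_x\{\tau^+_\infty<\infty\}>0$ for all large $x$.

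The main obstacle is the upward overshoot produced by the positive jumps: the process does not exit a block exactly at $v_{k+1}$, so the clean one-sided first-passage picture available for spectrally negative processes fails. I would handle this through the monotonicity of $u^{1-a}$, since an overshoot only decreases $X_\sigma^{1-a}$ when $a>1$ and only increases it when $a<1$, in each case in the favourable direction, and through the strong Markov property restarted at $X_{\tau^+(v_k)}\ge v_k$; the downcrossings remain exact because $X$ has no negative jumps. A second delicate point is that the exit-probability estimate wants $\rho$ large, to secure a strong bias, whereas the time estimate wants summability in $k$; reconciling these via the transient embedded walk with summable per-level expected time, rather than by demanding monotone climbing, is the technical heart of part~(ii).
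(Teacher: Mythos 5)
Your part (ii) is correct, but it takes a genuinely different route from the paper. The paper works one scale at a time: starting from $x$, three optional-stopping estimates for the martingale of Lemma \ref{t3.3} (downcrossing before upcrossing; both exit times infinite, cf.\ (\ref{explosion_b'}); upcrossing slower than the deterministic budget $t(x)=(\ln x^{1-\de})^{1-r}$) give $\bP_x\{\tau^+(x^{1+\de})>t(x)\}\le 2x^{-(a-1)\de}$, and the proof then climbs the doubly exponential ladder $x\mapsto x^{1+\de}$, multiplying per-step success probabilities and summing the deterministic budgets, both series converging because $r>1$. You instead use geometric levels $\rho^k$, extract a uniform upward bias $1-\rho^{1-a}$ per block (exact downcrossings from spectral positivity), and control time through the linear local martingale $N_t$, getting $\bE[\sigma_k]\le Ck^{-r}$, summable again because $r>1$; a transient biased-walk comparison finishes. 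Your expected-time bound via $N_t$ is an attractive replacement for the paper's deterministic budgets; note that it also yields $\sigma_k<\infty$ a.s., a fact your first estimate tacitly uses when turning the downcrossing bound into a lower bound on the upcrossing probability (the paper needs its own separate estimate for this). The cost of your route is the bookkeeping for the embedded walk: you must bound the expected number of traversals of each block uniformly (geometric, by transience) before summing the $\bE[\sigma_k]$, a step the paper's monotone climbing scheme avoids entirely.

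Part (i), however, has a genuine gap. Since $G_a(u)\ge-(\ln u)^r$ is assumed only for $u$ large, say $u\ge b$, every optional-stopping estimate must be cut off at $\tau^-_b$, and what Borel--Cantelli then delivers is only a dichotomy: $\bP$-a.s., either the first-passage times to your levels eventually exceed thresholds tending to infinity (hence no explosion), or the process drops below $b$ before exploding. Your sketch passes directly from this to ``$\bP_x\{\tau^+_\infty<\infty\}=0$ for every $x>0$,'' which does not follow: nothing so far excludes explosion on a later excursion above $b$, and for $x<b$ the estimates say nothing at all. This is exactly what the paper's closing reference to the proof of Theorem \ref{t2.5}(i) supplies: setting $\phi(y):=\bE_y[\e^{-\la\tau^+_\infty};\,\tau^+_\infty<\infty]$, the dichotomy, the strong Markov property, and the absence of negative jumps (so the process re-enters $[0,b]$ exactly at $b$, and from $b$ it must cross the original starting level again before exploding) combine to give $\phi(b)\le\beta\,\phi(b)$ with $\beta=\bE_{\ep^{-1}}[\e^{-\la\tau^-_b};\,\tau^-_b<\infty]<1$, forcing $\phi\equiv0$. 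Without this renewal/contraction step the proof is incomplete.

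A further caution on part (i): your per-block formulation (``the blocks cannot all be traversed in finite total time'') is risky, because upward jumps can overshoot whole blocks, so the event ``block $k$ is traversed within time $t_k$'' contains the skipping event (traversal time $0$), whose probability need not be small, let alone summable. The events must instead be phrased with cumulative first-passage times and cumulative thresholds, as in the paper: $\{\tau^+(\ep^{-n})<\tau^-_b\wedge d_n\}$ with $d_n\asymp n^{1-r}\goto\infty$. Note also that the role of $r<1$ is precisely to let these thresholds diverge; the summability of the exit probabilities comes from the geometric gaps between levels, not from $r<1$ as your sketch suggests.
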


The proof of Theorem \ref{t3.1} is deferred to Section \ref{proofs}.

\bcondition\label{c3}
For any $x$ and $b$ with $b>x>0$,
 \beqlb\label{ca}
\bP_x\{\tau^+_b<\infty \}>0.
 \eeqlb
\econdition

Putting Theorem \ref{t3.1} (ii) and the above  condition together we reach the following remark.
\bremark
If Condition \ref{c3} and the assumption in Theorem \ref{t3.1} (ii) hold,
then $\bP_x\{\tau^+_\infty<\infty \}>0$ for all $x>0$.
\eremark

The proof for the next result is deferred to the end of  Section \ref{Foster_Lyapunov}.
\bproposition\label{L0.2}
Given $b>x>0$, if there exists $a\in (0,x)$ so that
 \beqnn
\inf_{y\in[a,b]}\gamma_1(y)+\inf_{y\in[a,b]}\gamma_2(y)> 0,
 \eeqnn
then \eqref{ca} holds.
\eproposition

\subsection{ Coming down from infinity}

We say that the process $X$ comes down from infinity if
\begin{equation}\label{comedown_def}
\lim_{b\goto\infty}\lim_{x\goto\infty}\bP_x \{\tau^-_b<t\}=1 \quad\text{ for all}\quad t>0,
\end{equation}
 and it stays infinite if
\begin{equation*}
\lim_{x\goto\infty}\bP_x \{\tau^-_b<\infty\}=0\quad\text{ for all}\quad b>0.
\end{equation*}

%{\green Mention the existence of $X^\infty$}
%{\blue By using the same approach in the proof of Theorem 1.13 in Li (2016) we can  construct the process that coming down from infinity.}

 We first present equivalent conditions for coming down from infinity. From the proof one can see that they hold for any real-valued Markov processes with no downward jumps.

\begin{prop}
The following statements are equivalent:
\begin{itemize}
\item[(i)]
Process $X$ comes down from infinity.
\item[(ii)]
$\lim_{x\goto\infty}\mbb{E}_x[\tau^-_b]<\infty $ for all large $b$.
\item[(iii)]
\begin{equation}\label{comedown_equ}
\lim_{b\goto\infty}\lim_{x\goto\infty}\mbb{E}_x[\tau^-_b]=0.
\end{equation}
\end{itemize}
%If process $X$ comes down from infinity, then $\bE_\infty \tau^-_b<\infty$ for all $b$ big enough.
\end{prop}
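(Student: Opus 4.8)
The plan is to exploit the absence of downward jumps, which forces $X$ to reach any lower level continuously, so that $X_{\tau^-_y}=y$ on $\{\tau^-_y<\infty\}$ whenever the process starts strictly above $y$. Writing $m(x,b):=\bE_x[\tau^-_b]$ and $p(x,b,t):=\bP_x\{\tau^-_b<t\}$ for $x>b>0$, I would first record the additivity that results from combining this with the strong Markov property: for $b<y<x$,
\[
m(x,b)=m(x,y)+m(y,b),\qquad \bP_x\{\tau^-_b<t\}\le \bP_y\{\tau^-_b<t\}.
\]
In particular $m(\cdot,b)$ is nondecreasing and $p(\cdot,b,t)$ is nonincreasing in the starting point, and both are monotone in $b$. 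Hence the inner limits $M_b:=\lim_{x\goto\infty}m(x,b)\in(0,\infty]$ and $q(b,t):=\lim_{x\goto\infty}p(x,b,t)\in[0,1]$ exist, as do the outer limits $\lim_{b\goto\infty}M_b$ and $\lim_{b\goto\infty}q(b,t)$. With these monotone limits in hand I would prove the cycle (i)$\Rightarrow$(ii)$\Rightarrow$(iii)$\Rightarrow$(i).

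For (i)$\Rightarrow$(ii), I would fix $t=1$ and use (i) to pick $b_0$ with $q(b_0,1)>1/2$; the monotonicity in the starting point upgrades this to $\bP_y\{\tau^-_{b_0}<1\}\ge q(b_0,1)>1/2$ for \emph{every} $y\ge b_0$. A geometric--trials (renewal) argument via the Markov property at the integer times, restarting on the event $\{\tau^-_{b_0}>k\}$ where necessarily $X_k\ge b_0$, then gives $\bP_x\{\tau^-_{b_0}>k\}\le (1/2)^k$ uniformly in $x>b_0$, whence $m(x,b_0)\le\sum_{k\ge 0}(1/2)^k=2$ and $M_{b_0}\le 2<\infty$. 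Monotonicity in $b$ yields (ii).

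For (ii)$\Rightarrow$(iii), I would push the additivity identity to the limit: for large $b<b'$, letting $x\goto\infty$ in $m(x,b)=m(x,b')+m(b',b)$ gives $M_b=M_{b'}+m(b',b)$, so $m(b',b)=M_b-M_{b'}$. Letting $b'\goto\infty$, the left side $\bE_{b'}[\tau^-_b]$ tends to $M_b$ (by definition of $M_b$, with the starting level $b'$ playing the role of $x$), while the right side tends to $M_b-\lim_{b'\goto\infty}M_{b'}$; comparing the two forces $\lim_{b\goto\infty}M_b=0$, which is (iii). Finally (iii)$\Rightarrow$(i) is immediate from Markov's inequality: since $m(x,b)\le M_b$, one has $p(x,b,t)\ge 1-M_b/t$, so $q(b,t)\ge 1-M_b/t$ and hence $\lim_{b\goto\infty}q(b,t)=1$ for every $t>0$.

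I expect the geometric--trials step in (i)$\Rightarrow$(ii) to be the main obstacle, since it is where a statement about probabilities must be converted into a uniform bound on expectations; the crucial ingredient making it work is the monotonicity in the initial state, which is exactly where the no--downward--jumps hypothesis and the exact hitting $X_{\tau^-_y}=y$ enter. The same hitting property underlies the additivity identity driving (ii)$\Rightarrow$(iii), so spectral positivity is used throughout rather than any finer structure of SDE \eqref{sdeB}, consistent with the claim that the equivalence holds for any real-valued Markov process without downward jumps.
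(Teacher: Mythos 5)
Your proposal is correct, and for two of the three implications it coincides with the paper's own argument: your (ii)$\Rightarrow$(iii) step --- passing to the limit $x\to\infty$ in the additivity identity $\bE_x[\tau^-_b]=\bE_x[\tau^-_{b'}]+\bE_{b'}[\tau^-_b]$ (valid by the strong Markov property and the exact hitting $X_{\tau^-_{b'}}=b'$ forced by the absence of downward jumps) and then letting $b'\to\infty$ --- is precisely the paper's display (\ref{equivalent_a}) with $b'$ in the role of $x'$; and your (iii)$\Rightarrow$(i) step is the same one-line Markov-inequality argument the paper uses. The genuine difference is in (i)$\Rightarrow$(ii): the paper does not prove this implication at all, but refers the reader to the proof of Theorem 1.11 of Li (2018), whereas you supply a self-contained argument --- choose $b_0$ with $\lim_{x\to\infty}\bP_x\{\tau^-_{b_0}<1\}>1/2$, upgrade this to $\bP_y\{\tau^-_{b_0}<1\}>1/2$ uniformly over all starting points $y\ge b_0$ via monotonicity in the initial state (again a consequence of spectral positivity plus the strong Markov property), and iterate the Markov property at integer times to get $\bP_x\{\tau^-_{b_0}>k\}\le 2^{-k}$, hence $\bE_x[\tau^-_{b_0}]\le 2$ uniformly in $x>b_0$, and then $\bE_x[\tau^-_b]\le 2$ for every $b\ge b_0$ since $\tau^-_b\le\tau^-_{b_0}$. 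This geometric-trials argument is sound; the one point deserving explicit care is that the uniform bound $\bP_y\{\tau^-_{b_0}<1\}>1/2$ must also be available at the boundary starting point $y=b_0$ (needed when $X_k=b_0$ on the survival event $\{\tau^-_{b_0}>k\}$), which does follow from the same creeping-plus-strong-Markov reasoning you invoke, since a path started above $b_0$ must pass through $b_0$ exactly before going below it. In short, your route reproduces the paper's proof where the paper gives one, and replaces the paper's external citation with a complete elementary argument, making the proposition self-contained.
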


\begin{proof}
For the proof that (i) implies (ii), we refer to the proof of Theorem 1.11 of Li (2018).
%If (i) holds, then  there exists $t>0$ such that for all large $b$,
%$\lim_{x\goto\infty}\mbb{P}_x\{\tau^-_b\le t\}>0$.
%By the Markov property  we have
%\beqnn
%\lim_{x\goto\infty}\mbb{P}_x\{\tau^-_b> nt\}
%\ar=\ar
%\lim_{x\to\infty}\mbb{P}_x\{\tau^-_0>nT\}  \cr
%\ar=\ar
%\lim_{x\goto\infty}\mbb{E}_x\Big[1_{\{\tau^-_b> t\}}1_{\{\tau^-_b\circ\theta_t> (n-1)t\}}\Big] \cr
% \ar=\ar
%\mbb{E}_\infty \Big[1_{\{\tau^-_b> t\}}\mbb{E}_x[1_{\{\tau^-_0 \circ\theta_T> (n-1)T\}}|\mcr{F}_T]\Big] \cr
% \ar=\ar
%\lim_{x\goto\infty}\mbb{E}_x\Big[1_{\{\tau^-_b> t\}}\mbb{E}_{X_t}[1_{\{\tau^-_b> (n-1)t\}}]\Big] \cr
% \ar\le\ar
%\lim_{x\goto\infty}\mbb{E}_x \Big[1_{\{\tau^-_b> t\}}\lim_{x\goto\infty}\mbb{E}_x [1_{\{\tau^-_b> (n-1)t\}}]\Big] \cr
% \ar=\ar
%\lim_{x\goto\infty} \mbb{P}_x\{\tau^-_b> t\}\lim_{x\goto\infty}\mbb{P}_x\{\tau^-_b> (n-1)t\}.
% \eeqnn
%Then $$\lim_{x\goto\infty}\mbb{P}_x\{\tau^-_b> nt\}\leq \lim_{x\goto\infty}\mbb{P}_x\{\tau^-_b> t\}^n=(1-\lim_{x\goto\infty}\mbb{P}_x\{\tau^-_b\leq t\})^n$$ by %induction. It follows that
% \beqnn
%%\lim_{x\goto\infty}\mbb{E}_x [\tau^-_b]
%=\int^\infty_0\lim_{x\goto\infty}\mbb{P}_x\{\tau^-_b> s\}\dd s< \infty
% \eeqnn
% and (ii) holds.

Suppose that (ii) holds. Then for any $x'>b$, we have
 \begin{equation}\label{equivalent_a}
 \lim_{x\goto\infty}\mbb{E}_x [ \tau^-_{b}]=\lim_{x\goto\infty}(\mbb{E}_x[\tau^-_{x'}]+ \mbb{E}_{x'}[\tau^-_{b}]).
 \end{equation}
 First letting $x'\goto\infty$, and then letting $b\goto\infty$ in (\ref{equivalent_a}), we obtain (\ref{comedown_equ}). (iii) thus holds.

 %\[\lim_{b\goto\infty}\lim_{x\goto\infty}\mbb{E}_x[\tau^-_b]=c\]
%for $0\leq c<\infty$.  If $c>0$, there exist $b'>0$ such that
% \begin{equation}\label{comedown_equ_a}
%c\leq \lim_{x\goto\infty}\mbb{E}_x [\tau^-_{b}]<4c/3 \,\,\, \text{ for all} \,\,\, b\geq b'.
%\end{equation}
% Then there exists $x'>b'$ such that
% $ \mbb{E}_{x'} [\tau^-_{b'}]\geq 2c/3$,
% which implies that
% \[\lim_{x\goto\infty}\mbb{E}_x [ \tau^-_{b'}]=\lim_{x\goto\infty}(\mbb{E}_x[\tau^-_{x'}]+ \mbb{E}_{x'}[\tau^-_{b'}])\geq 5c/3,\]
% which contradicts with (\ref{comedown_equ_a}). Therefore, (\ref{comedown_equ}) holds.

 (i) follows from (iii) by the Markov inequality.

%On the other hand, if (\ref{comedown_equ}) holds, then for any $t>0$,
%\[\lim_{b\goto\infty}\lim_{x\goto\infty}\bP_x\{\tau^-_b>t\}\leq \lim_{b\goto\infty}\lim_{x\goto\infty}\frac{\mbb{E}_x [\tau^-_{b}]}{t}=0. \]
%Process $X$ thus comes down from infinity and (i) holds.
\end{proof}

\begin{thm}\label{comingdown}
\begin{itemize}
\item[{\normalfont(i)}]
If there exist constants $a>1$ and $r<1$ such that
\begin{equation}\label{condition_stay}
G_a(u)\geq -(\ln u)^r
\end{equation}
 for  all $u$ large, then process $X$ stays infinite.
\item[{\normalfont(ii)}]
If there  exist constants $0<a<1, r>1$ such that
\begin{equation}\label{comingdown_a}
G_a(u)\geq (\ln u)^r
\end{equation}
 for all $u$ large enough, then process $X$ comes  down  from infinity.
\end{itemize}
\end{thm}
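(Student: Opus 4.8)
The plan is to base both parts on the semimartingale decomposition of $X_t^{1-a}$. By Itô's formula together with Lemma \ref{t4.1} and the definition (\ref{1.7}) of $G_a$, one checks that
\[
X_t^{1-a}+\int_0^t X_s^{1-a}G_a(X_s)\,\dd s
\]
is a local martingale (this is the content of Lemma \ref{t3.3}): the Gaussian term contributes the $\gamma_1$-part of $G_a$, the compensated jumps the $\gamma_2H_a$-part, and the drift the $\gamma_0$-part, so that $\mathcal{L}(u^{1-a})=-u^{1-a}G_a(u)$ for the generator $\mathcal{L}$ of $X$. All estimates below come from optional stopping applied to this martingale (and to its exponential version $e^{\int_0^tG_a(X_s)\dd s}X_t^{1-a}$) on the geometrically growing intervals $(v_{k-1},v_{k+1})$ with $v_k=\theta^k$, $\theta>1$, started from the level $v_k$, followed by a summation over $k$. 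The absence of negative jumps is used throughout, since it forces $X_{\tau^-_{v_{k-1}}}=v_{k-1}$ exactly on downward exits.

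For part (ii) I would fix $b$ large and, via the preceding proposition, reduce ``comes down from infinity'' to $\lim_{x\to\infty}\mathbb{E}_x[\tau^-_b]<\infty$. Writing $T_k:=\tau^-_{v_{k-1}}\wedge\tau^+_{v_{k+1}}$ and stopping at $T_k\wedge t$, the nonnegativity of $X^{1-a}$ (here $1-a>0$) together with $G_a(u)\geq(\ln u)^r$ gives, after $t\to\infty$,
\[
\mathbb{E}_{v_k}[T_k]\leq\frac{v_k^{1-a}}{v_{k-1}^{1-a}(\ln v_{k-1})^r}=\frac{\theta^{1-a}}{((k-1)\ln\theta)^r}=:\epsilon_k,
\]
while $G_a\geq0$ at large arguments makes $u\mapsto u^{1-a}$ superharmonic there, so $v_k^{1-a}\geq p_k v_{k+1}^{1-a}+(1-p_k)v_{k-1}^{1-a}$ and the upward exit probability obeys $p_k\leq(1+\theta^{1-a})^{-1}<\tfrac12$. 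Since $r>1$ gives $\sum_k\epsilon_k<\infty$, I would then treat the induced level process as a walk with uniform downward bias and per-visit cost $\epsilon_k$: the expected number of visits to each level is bounded uniformly in the starting level, whence $\sup_k\mathbb{E}_{v_k}[\tau^-_b]\leq C\sum_k\epsilon_k<\infty$, the desired bound.

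For part (i), where $a>1$ makes $u^{1-a}$ bounded and decreasing with $u^{1-a}\to0$ as $u\to\infty$, I would work with the exponential martingale $e^{\int_0^tG_a(X_s)\dd s}X_t^{1-a}$. Optional stopping at $T_k$ gives $v_k^{1-a}=\mathbb{E}_{v_k}\big[e^{\int_0^{T_k}G_a\dd s}X_{T_k}^{1-a}\big]$; on a downward exit the terminal value is exactly $v_{k-1}^{1-a}$, while $G_a(u)\geq-(\ln u)^r$ bounds the exponential factor below on that event, yielding an upper bound on the one-level descent probability $\mathbb{P}_{v_k}(\tau^-_{v_{k-1}}<\tau^+_{v_{k+1}})$. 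Because $r<1$, these bounds are summable enough that a Borel--Cantelli argument shows the process started at $v_N$ a.s.\ ceases to descend after finitely many levels as $N\to\infty$, so $\lim_{x\to\infty}\mathbb{P}_x(\tau^-_b<\infty)=0$, i.e.\ $X$ stays infinite.

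The main obstacle I anticipate is twofold. First, in part (i) the lower bound on the exponential factor $e^{\int_0^{T_k}G_a\dd s}$ over a descent requires simultaneous control of the excursion duration $T_k$ (the time and the probability are coupled), and it is precisely $r<1$ that keeps the slowly growing penalty $(\ln v_{k+1})^r$ compatible with the geometric gap; making this quantitative is the delicate step. Second, because the jumps are positive, an upward exit from $(v_{k-1},v_{k+1})$ need not land at $v_{k+1}$ but may overshoot arbitrarily high, so the naive level-recursion does not close; I would resolve this using the martingale identity itself to control $\mathbb{E}_{v_k}[X_{T_k}^{1-a}]\leq v_k^{1-a}$, limiting the overshoot on the correct scale, combined with the uniform-in-level boundedness of the expected hitting times. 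A final, routine point is the passage from local to genuine martingale, handled by stopping at $T_k\wedge t$, using boundedness of the integrand on each interval, and monotone convergence as $t\to\infty$.
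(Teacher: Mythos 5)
Your part (i) argument has a structural gap that cannot be repaired in the form you propose. In part (i) the function $G_a$ is only bounded below by $-(\ln u)^r$ and may be negative, so the factor $\exp\{\int_0^{T_k}G_a(X_s)\dd s\}$ has no positive lower bound on the descent event unless you also truncate time --- exactly the coupling you flag as an ``obstacle'' but never resolve. After truncation, Lemma \ref{t3.3} gives $\bP_{v_k}\{\tau^-_{v_{k-1}}<\tau^+_{v_{k+1}}\wedge d_k\}\le\theta^{1-a}\e^{d_k((k+1)\ln\theta)^r}$, and for geometric levels $v_k=\theta^k$ this is a constant bounded away from $0$ no matter how $d_k$ is chosen; such bounds are not summable, so Borel--Cantelli gives nothing. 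Moreover, slow descents (taking longer than $d_k$) and excursions above $v_{k+1}$ during a descent are not improbable events --- they are simply outside the scope of the estimate, so the level-by-level scheme does not cover $\{\tau^-_b<\infty\}$ at all. This is precisely why the paper's proof is organized differently: it partitions $\{\tau^-_b<d\}$ according to which window $[\ep^{-2^n},\ep^{-2^{n+1}})$ contains the running supremum $\sup_{0\le s\le\tau^-_b}X_s$ (display (\ref{stay_infinity})), so that each piece is handled by a \emph{single} application of Lemma \ref{t3.3} over the whole window (no recursion, no overshoot issue), and it uses the doubly exponential scale $\ep^{-2^n}$ so that, thanks to $r<1$, the time penalty $d(2^{n+1}\ln\ep^{-1})^r$ is dominated by the spatial gain $(a-1)2^n\ln\ep^{-1}$, yielding the summable bounds $b^{a-1}\ep^{(a-1)2^{n-1}}$ whose sum vanishes as $\ep\to0$, uniformly over the level index. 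Both devices --- the supremum decomposition and the super-geometric windows --- are absent from your sketch, and with geometric levels neither the summability nor the control of upward excursions can be recovered.

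Part (ii) is a genuinely different route from the paper's (which bounds the \emph{probability} that each descent $b^{(1+\de)^n}\to b^{(1+\de)^{n-1}}$ occurs within time $t(b^{(1+\de)^n})$, multiplies these bounds, and uses $\sum_n t(b^{(1+\de)^n})<\infty$), and your two martingale consequences $\bE_{v_k}[T_k]\le\epsilon_k$ and $p_k\le(1+\theta^{1-a})^{-1}$ are correct. But the step ``the induced level process is a walk with uniform downward bias, hence expected visit counts are uniformly bounded'' is exactly where the overshoot problem bites: an upward exit lands in an arbitrarily high window, so the level chain is not nearest-neighbour, visits to level $j$ are generated by excursions from all lower levels as well, and descents restart from interior points rather than from levels. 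The tool you name, $\bE_{v_k}[X_{T_k}^{1-a}]\le v_k^{1-a}$ (hence $\bP_{v_k}\{X_{T_k}\ge v_{k+m}\}\le\theta^{-m(1-a)}$), is the right one, but you still must close an infinite system of inequalities for $e_j:=\bE_{v_j}[\tau^-_{v_{j-1}}]$ including the interior-start descents, and this closes only when the overshoot tail is made subcritical, e.g.\ by taking $\theta$ large enough that $\sum_{m\ge1}(m+1)\theta^{-m(1-a)}<1$. As written, part (ii) is an incomplete argument; with that bookkeeping supplied (and non-explosion, which follows from Theorem \ref{t3.1}(i), invoked to justify the limits defining $e_j$), it would be a legitimate alternative to the paper's proof via the equivalence proposition preceding Theorem \ref{comingdown}.
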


The proof of Theorem \ref{comingdown} is deferred to Section \ref{proofs}.

\begin{remark}
More recently, for the process $X$ with $\gamma_0=\gamma_1=\gamma_2$, the speeds of coming down from infinity are studied in details in   Dawson et al. (2018) for the cases that either the function $\gamma_i$ is regularly varying at infinity or $\gamma_i(x)=g(x)\e^{\theta x}$ for $\theta>0$ and  function $g$ that is regularly varying at infinity.
\end{remark}

\subsection{An application: weighted total population}\label{population}

%Suppose that the nonnegative function $\gamma$ on $\bR^+$ is bounded
%on any bounded interval and
%$\gamma(y)>0$ for all $y>0$.

Let  $\gamma$ be a strictly positive function defined on $[0,\infty)$ that is bounded
on any bounded interval.
In the following,
we consider the  weighted occupation time, or the weighted total population of $X$ before explosion, defined as
 \beqnn
S=\int_0^{\tau_0^-\wedge \tau_\infty^+}\gamma(X_s)\dd s.
 \eeqnn
For $t\ge0$ define
 \beqnn
U_t:=\int_0^{t\wedge \tau_0^-\wedge \tau_\infty^+}\gamma(X_s)\dd s
~~\text{and}~~
V_t:=\inf\{s> 0:U_s> t\}.
 \eeqnn
Define the process $\bar{X}\equiv\{\bar{X}_t:t\ge0\}$ by $\bar{X}_t:=X_{V_t}$ for $V_t<\infty$
and $\bar{X}_t:=X_\infty:=\limsup_{t\to\infty}X_t$ for $V_t=\infty$.
Define  stopping times $\bar{\tau}_0^-$ and $\bar{\tau}_\infty^+$
similarly to $\tau_0^-$ and $\tau_\infty^+$, respectively, with $X$ replaced by $\bar{X}$.
%We first establish the following result.

   We first observe that with the above-mentioned Lamperti type transform, a time changed solution to the generalized  Dawson-Li equation (\ref{sdeB})  remains a solution to another  generalized  Dawson-Li equation.

We leave the proof of the next result to the interested readers.

\btheorem\label{t1.13}
For $i=0,1,2$ and $y>0$ define $\bar{\gamma}_i(y):=\gamma_i(y)/\gamma(y)$.
Then there exist, on an extended probability space, a Gaussian white noise
$\{W_0(\dd s, \dd u) : s \ge0, u>0\}$
with intensity $\dd s \dd u$
and an independent compensated Poisson random measure
$\{\tilde{N}_0(\dd s, \dd z, \dd u) : s\ge0, z>0, u > 0\}$
with intensity $\dd s\pi(\dd z)\dd u$
so that $\{\bar{X}_t: t\ge 0\}$ solves the following SDE:
 \beqlb\label{1.5}
\bar{X}_t
 \ar=\ar
x+\int_0^t\bar{\gamma}_0(\bar{X}_s)\dd s
+\int_0^t\int_0^{\bar{\gamma}_1(\bar{X}_s)}W_0(\dd s,\dd u)\cr
\ar\ar\qquad+\int_0^t\int_0^\infty\int_0^{\bar{\gamma}_2(\bar{X}_{s-})}
z\tilde{N}_0(\dd s,\dd z,\dd u)
 \eeqlb
for $0\le t<\bar{\tau}_0^-\wedge\bar{\tau}_\infty^+$.
\etheorem

 We leave the proof of the next key observation to interested readers.

\begin{prop}\label{t1.2} We have
$S =\bar{\tau}_0^-\wedge\bar{\tau}_\infty^+$.
\end{prop}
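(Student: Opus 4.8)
The plan is to treat $U$ as the continuous additive functional driving the Lamperti-type time change and $V$ as its generalized inverse, and to read the lifetime $\bar{\tau}_0^-\wedge\bar{\tau}_\infty^+$ of the time-changed process $\bar{X}$ directly off the range of $U$. First I would record the elementary structure of $U$. Writing $\tau:=\tau_0^-\wedge\tau_\infty^+$, and using that $\gamma$ is strictly positive and locally bounded while $X_s\in(0,\infty)$ for every $s<\tau$, the map $t\mapsto U_t$ is continuous, strictly increasing on $[0,\tau)$, and constant equal to $S$ on $[\tau,\infty)$. Hence $U$ maps $[0,\tau)$ continuously and bijectively onto $[0,S)$, so for its inverse $V_t=\inf\{s>0:U_s>t\}$ we obtain $V_t<\tau$ with $U_{V_t}=t$ for every $t<S$, whereas $V_t=\infty$ for every $t\ge S$, since $U_s\le S\le t$ for all $s$ precludes $U_s>t$.

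From this description one inequality is immediate. For $t<S$ we have $V_t<\tau$, hence $\bar{X}_t=X_{V_t}\in(0,\infty)$; thus $\bar{X}$ stays strictly between $0$ and every finite level throughout $[0,S)$, giving $\bar{\tau}_0^-\wedge\bar{\tau}_\infty^+\ge S$.

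For the reverse inequality I would determine the terminal value $\bar{X}_S=X_\infty=\limsup_{t\to\infty}X_t$ and argue that, whenever $S<\infty$, it lies in $\{0,\infty\}$. If $\tau<\infty$ this is clear: in the extinction case $X$ reaches $0$ continuously (no downward jumps) and is absorbed there, so $X_\infty=0$, while in the explosion case $X_t\to\infty$ and $\infty$ is absorbing, so $X_\infty=\infty$. If instead $\tau=\infty$ and $S<\infty$, then on any compact $[\ep,M]\subset(0,\infty)$ one has $\gamma\ge\inf_{[\ep,M]}\gamma>0$, so finiteness of $S=\int_0^\infty\gamma(X_s)\,\dd s$ forces $X$ to spend only finite Lebesgue time in $[\ep,M]$; since $X$ has no downward jumps it cannot return from high to low levels without accruing interior occupation time, and therefore $X_t\to0$ or $X_t\to\infty$, i.e. $X_\infty\in\{0,\infty\}$ again. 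When $X_\infty=0$ we get $\bar{X}_S=0$ and hence $\bar{\tau}_0^-\le S$; when $X_\infty=\infty$ we get $\bar{X}_S=\infty$ and hence $\bar{\tau}_\infty^+\le S$. Combined with the previous paragraph this yields $\bar{\tau}_0^-\wedge\bar{\tau}_\infty^+=S$. When $S=\infty$ the identity holds trivially, since then $V_t<\infty$ and $\bar{X}_t\in(0,\infty)$ for every $t$, forcing $\bar{\tau}_0^-\wedge\bar{\tau}_\infty^+=\infty$.

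The main obstacle is the terminal-value dichotomy in the borderline regime $\tau=\infty$, $S<\infty$: one must rule out that $\limsup_{t\to\infty}X_t$ is finite and positive, which is precisely where the strict positivity and local boundedness of $\gamma$, together with the absence of downward jumps for $X$, are essential. A secondary technical point is matching the $\limsup$ in the definition of $X_\infty$ with the genuine limit of $X_t$ as $t\to\infty$ (equivalently of $X_{V_t}$ as $t\uparrow S$), so that $\bar{X}$ is continuous at the terminal time and the stopping times $\bar{\tau}_0^-$ and $\bar{\tau}_\infty^+$ are correctly identified with $S$.
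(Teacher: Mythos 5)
The paper gives no proof of this proposition (it is explicitly left to ``interested readers''), so there is no official argument to compare against; your strategy---read the lifetime of $\bar X$ off the range of $U$, get $\bar\tau_0^-\wedge\bar\tau_\infty^+\ge S$ from strict positivity and local boundedness of $\bar X=X\circ V$ on $[0,S)$, and get the reverse inequality from the terminal value $\bar X_S=X_\infty$---is the natural one, and the first two steps are sound. The genuine gap is the dichotomy you invoke when $S<\infty$: the claim that finite occupation time of every compact $[\varepsilon,M]\subset(0,\infty)$, together with the absence of downward jumps, forces $X_t\to 0$ or $X_t\to\infty$ is \emph{not} a pathwise implication, and your justification (``cannot return from high to low levels without accruing interior occupation time'') does not prove it. A c\`adl\`ag path with no negative jumps can satisfy $\liminf_{t\to\infty}X_t=0$ and $\limsup_{t\to\infty}X_t\ge L>0$ while spending only finite total time in $[L/4,L/2]$: it makes infinitely many continuous down-crossings of that interval whose durations are summable. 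Ruling this out is an irreducibly probabilistic step: by the strong Markov property (and the absence of negative jumps, so each down-crossing starts exactly at level $L/2$), successive down-crossing durations are conditionally distributed as the first passage time from $L/2$ to $L/4$, whose law puts no mass at $0$; a conditional Borel--Cantelli argument then makes their sum a.s.\ infinite, contradicting the occupation bound. Without this, the central case of your proof is an assertion, not an argument. The same issue infects your explosion case: when $\tau_\infty^+<\infty$ you assert $X_t\to\infty$, which presupposes that oscillatory explosion ($\limsup=\infty$, $\liminf<\infty$ as $t\uparrow\tau_\infty^+$) is impossible; note that the paper's convention freezes $X$ at the value $\limsup_n X_{\tau_n-}$, which on such paths need not be $\infty$, so ``$\infty$ is absorbing'' does not by itself give $X_\infty=\infty$.

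Two repairs are worth recording. First, the explosion case needs no terminal-value analysis at all: on $\{\tau_\infty^+<\tau_0^-\}$ one has $\tau_n^+<\tau_\infty^+$ for every $n$ (the process is finite-valued with finite jumps before its lifetime), hence $\bar X_{U_{\tau_n^+}}=X_{\tau_n^+}> n$ with $U_{\tau_n^+}<S$, so $\bar\tau_n^+<S$ for all $n$ and $\bar\tau_\infty^+\le S$ directly. The same observation handles \emph{any} scenario with $\sup_{t<\tau}X_t=\infty$, so the only case genuinely requiring the dichotomy is $\tau=\infty$, $\sup_t X_t<\infty$, $S<\infty$, where one must show $X_t\to 0$ a.s. Second, your bound $\gamma\ge\inf_{[\varepsilon,M]}\gamma>0$ uses positivity of the infimum, which does not follow from the paper's hypotheses ($\gamma$ strictly positive and bounded on bounded intervals, but not assumed continuous or lower semicontinuous); this should be flagged as an additional standing assumption, since it is exactly what converts finiteness of $S$ into finiteness of the interior occupation time.
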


\bremark\label{rem_population}
By Proposition \ref{t1.2} and  Theorem \ref{t1.13}, the finiteness for $S$ is translated into
 extinction and explosion behaviors for the time changed process $\bar{X}$ for which we can apply Theorems \ref{t2.5} and \ref{t3.1}. More details can be found later in Example \ref{example} in Section \ref{subsec_power}.
 If $\gamma(x)=\gamma_1(x)=\gamma_2(x)\equiv x$ and $\gamma_0$ satisfies certain interaction condition,    %the assumptions in Remark \ref{t1.8} hold,
then the behaviors for $S$ have been studied in Theorems 4.3.1 and 4.3.2 of Le (2014).
\eremark

\subsection{Processes with power  branching rate functions}\label{subsec_power}

To obtain more explicit results, in this subsection we only consider processes with power function branching rates, i.e.
 \[\gamma_i(x)=b_i x^{r_i},\qquad x>0, \,\,\, i=0,1,2, \]
for $r_0, r_1, r_2\geq 0$, $b_0\in \bR$,  $b_1, b_2\geq 0, b_1+b_2>0 $. In addition, we assume that the measure $\pi$ is defined in \eqref{pi} with $1<\alpha<2$. %Then
%\beqnn\psi(\lambda)
%\ar=\ar
%-b_0\lambda+\frac{b_1^2}{2}\lambda^2+b_2\int_{(0,\infty)}(\e^{-\lambda z} - 1 + \lambda z) \pi(\dd z)\cr
%\ar=\ar
%-b_0\lambda+\frac{b_1^2}{2}\lambda^2+b_2\lambda^\alpha
%\eeqnn
Then  by \eqref{H_a},
\begin{equation*}
\begin{split}
G_a(u)=&(a-1)u^{-1} b_0u^{r_0}-2^{-1}a(a-1)u^{-2} b_1u^{r_1}\\
&-  a(a-1)u^{-\alpha} b_2u^{r_2}c_{\alpha,a},
\end{split}
\end{equation*}
 where
\[c_{\alpha, a}:=\frac{\alpha(\alpha-1)}{\Gamma(2-\alpha)}\int_0^\infty y^{1-\alpha}\dd y\int_0^1 (1+yv)^{-1-a}(1-v)\dd v.\]
It is easy to see from properties of the beta function that
 \beqnn
c_{\alpha,1}
 \ar:=\ar
\frac{\alpha(\alpha-1)}{\Gamma(2-\alpha)}\int_0^\infty y^{1-\alpha}\dd y\int_0^1 (1+yv)^{-2}(1-v)\dd v \cr
 \ar=\ar
\frac{\alpha(\alpha-1)}{\Gamma(2-\alpha)}\int_0^\infty x^{1-\alpha}(1+x)^{-2}\dd x\int_0^1v^{\alpha-2}(1-v)\dd v \cr
 \ar=\ar
\frac{\alpha(\alpha-1)}{\Gamma(2-\alpha)}
\times
\frac{\Gamma(2-\alpha)\Gamma(\alpha)}{\Gamma(2)}
\times
\frac{\Gamma(\alpha-1)\Gamma(2)}{\Gamma(\alpha+1)}
=\Gamma(\alpha).
 \eeqnn

\bexample\label{r1.1}
%Suppose that {\yellow the measure $\pi$ is defined in \eqref{pi} with $1<\alpha<2$} and
%for $r_0, r_1, r_2>0$, $b_0\in \bR$,  $b_1, b_2\geq 0, b_1+b_2>0 $,
%\[\gamma_i(x)=b_i x^{r_i}, i=0,1,2, \,\,\, x>0. \]
%Then {\yellow by \eqref{H_a},}
%\begin{equation*}
%\begin{split}
%G_a(u)=&(a-1)u^{-1}{\blue b_0u^{r_0}}-2^{-1}a(a-1)u^{-2}{\blue b_1}u^{r_1}\\
%&-  a(a-1)u^{-\alpha}{\blue b_2}u^{r_2}c_\alpha,
%\end{split}
%\end{equation*}
% where
%\[c_\alpha:={\yellow\frac{\alpha(\alpha-1)}{\Gamma(2-\alpha)}}\int_0^\infty y^{1-\alpha}\dd y\int_0^1 (1+yv)^{-1-\alpha}(1-v)\dd v.\]
In order to apply Theorems \ref{t2.5}, \ref{t3.1} and \ref{comingdown}, we only need to compare powers and coefficients of the three terms in the polynomial $G_a(u)$ for $0<a<1$ or $a>1 $, respectively.
To handle the critical case of $r_1=r_0+1$ or (and) $r_2=r_0+\alpha-1$ where some terms have the same power, we further choose the value of $a$ close enough to $1$ to obtain the best possible results. For instance, if both $r_1=r_0+1$ and $r_2=r_0+\alpha-1$ hold,
for $b_0>b_1/2+c_{\alpha,1} b_2$, we  choose the constant $a$ satisfying $1<a<b_0/(b_1/2+c_{\alpha, a} b_2)$, and for
$b_0<b_1/2+c_{\alpha,1} b_2$, we  choose the constant $a$ satisfying $(b_0/(b_1/2+c_{\alpha,a} b_2))\vee 0<a<1$.
\eexample

By Theorem \ref{t2.5} and Proposition \ref{thmA}, we can obtain explicit and very sharp conditions of extinction/non-extinction for the process $X$ in Example \ref{r1.1}.

For non-extinction we have   $\bP_x\{\tau^-_0=\infty\}=1 $ for all $x>0$ if one of the following
two sets of conditions holds.
\begin{itemize}
\item[(i)]
$b_0\leq 0$ and all of the following hold.
\begin{itemize}
\item[(ia)]
if $b_0<0$,  then $r_0\geq 1$;
\item[(ib)]
if $b_1>0$, then $r_1\geq 2$;
\item[(ic)]
if $b_2> 0$, then $r_2\geq\alpha$;
\end{itemize}
\item[(ii)]
$b_0>0$ and all of the following hold.
\begin{itemize}
\item[(iia)]
if $b_1>0$, then $r_1\geq (r_0+1)\wedge 2$; % and $b_0>b_1/2$;
\item[(iib)]
if $b_2>0$, then  $r_2\geq  (r_0-1+\alpha)\wedge\alpha$;  % and $b_0>c_\alpha b_2$.
\item[(iic)]
 \[b_0 > \frac{b_1}{2} 1_{\{r_1=r_0+1<2\}}+\Gamma(\alpha) b_2 1_{\{r_2=r_0+\alpha-1<\alpha\}}.\]
\end{itemize}
\end{itemize}
In addition, under condition (i), for all $x>0$
$$\bP_x\{\tau^-_0=\infty \,\,\, \text{and}\,\,\, X_t\goto 0 \,\, \text{ as} \,\, t\goto\infty\}=1,$$
i.e., extinguishing occurs.

For extinction with a positive probability we have $\bP_x\{\tau^-_0<\infty\}>0$ for all $x>0$ if one of the following two sets of conditions holds.
\begin{itemize}
\item[(i)]
$b_0\leq 0$ and at least one of the following hold.
\begin{itemize}
\item[(ia)]
$b_0<0$ and $r_0<1$;
\item[(ib)]
$b_1> 0$ and $r_1<2$;
\item[(ic)]
$b_2> 0$ and $r_2<\alpha$.
\end{itemize}
\item[(ii)] $b_0> 0$ and at least one of the following hold.
\begin{itemize}
\item[(iia)]
 $b_1>0$ and $r_1<(r_0+1)\wedge 2$;
\item[(iib)]
 $b_2> 0$ and $r_2<(r_0+\alpha-1)\wedge \alpha$;
\item[(iic)]
 \begin{equation*}
 \begin{split}
 b_0<
 &\frac{b_1}{2} 1_{\{r_1=r_0+1<2\}}+\Gamma(\alpha) b_2 1_{\{r_2=r_0+\alpha-1<\alpha\}}.\\
 \end{split}
 \end{equation*}
\end{itemize}
\end{itemize}
In addition, $\bP_x\{\tau^-_0<\infty\}=1 $ for all $x>0$  under condition (i).

\begin{remark}
Note that the above Condition (iic) for $\bP_x\{\tau^-_0=\infty\}=1 $ and the above Condition (iic) for $\bP_x\{\tau^-_0<\infty\}>0$ agree with the corresponding results in Berestycki {\it et al.} (2015); see the corresponding comments in Section \ref{nonlinear}.
\end{remark}

By Theorem \ref{t3.1} and Proposition \ref{L0.2}, we obtain rather sharp conditions of explosion/non-explosion for the process $X$ in Example \ref{r1.1}.

For non-explosion we have $\bP_x\{\tau^+_\infty<\infty\}=0$
for all $x>0$  if either $b_0\leq 0$ or at least one of the following is true.
\begin{itemize}
\item[(i)]
$b_0>0$ and $r_0\leq 1$.
\item[(ii)]
$b_0>0, \, r_0> 1 $ and at least one of the following hold.
\begin{itemize}
\item[(iia)]
 $b_1>0$ and $r_1>r_0+1$;
\item[(iib)]
 $b_2> 0$ and $r_2>r_0+\alpha-1$;
\item[(iic)]
\[b_0<\frac{b_1}{2} 1_{\{r_1=r_0+1\}}+\Gamma(\alpha) b_2 1_{\{r_2=r_0+\alpha-1\}}. \]
\end{itemize}
\end{itemize}

For explosion with a positive probability we have
$\bP_x\{\tau^+_\infty<\infty\}>0$
for all $x>0$   if
$b_0>0, r_0>1$ and all of the following hold.
\begin{itemize}
\item[(i)]
if $b_1>0$, then $r_1\leq r_0+1$;    %either  $r_1<r_0+1$ or  $r_1=r_0+1$; % and $b_0$ is much bigger than $b_1$.
\item[(ii)]
if $b_2> 0$, then $r_2\leq r_0+\alpha-1$;                 %either $r_2<r_0+\alpha-1$ or  $r_2=r_0+\alpha-1$; % and $b_0$ is much bigger than $b_2$.
\item[(iii)]
 \[b_0>\frac{b_1}{2} 1_{\{r_1=r_0+1\}}+\Gamma(\alpha) b_2 1_{\{r_2=r_0+\alpha-1\}}.\]
\end{itemize}

Similarly, by Theorem \ref{comingdown} we obtain rather sharp conditions for coming down from infinity.

The process $X$ in Example \ref{r1.1} comes down from infinity if one of the following holds
\begin{itemize}
\item[(i)]
$b_0\leq 0$ and at least one of the following hold. %either $r_0 1_{b_0<0}>1$ or  $r_1 1_{b_1>0}>2$ or $r_2 1_{b_2>0}>\alpha$;
\begin{itemize}
\item[(ia)]
 $b_0<0$ and $r_0>1$;
\item[(ib)]
 $b_1>0$ and $r_1>2$ ;
\item[(ic)]
 $b_2>0$ and $r_2>\alpha$.
\end{itemize}
\item[(ii)]
 $b_0> 0$ and at least one of the following hold.
\begin{itemize}
\item[(iia)]
 $b_1>0$ and $r_1>(r_0+1)\vee2$;
\item[(iib)]
 $b_2> 0$ and $r_2>(r_0+\alpha-1)\vee\alpha$;
\item[(iic)]
 \begin{equation*}
 \begin{split}
 b_0<
 &\frac{b_1}{2} 1_{\{r_1=r_0+1>2\}}+\Gamma(\alpha) b_2 1_{\{r_2=r_0+\alpha-1>\alpha\}}.\\
 \end{split}
 \end{equation*}
\end{itemize}
\end{itemize}

Process $X$ in Example \ref{r1.1} stays infinite if at least one of the following hold.
\begin{itemize}
\item[(i)]
 $b_0\leq 0$ and all of the following hold.
\begin{itemize}
\item[(ia)]
if $b_0<0$, then $r_0\le1$;
\item[(ib)]
if  $b_1>0$, then $r_1\le2$;
\item[(ic)]
if $b_2>0$, then  $r_2\le\alpha$.
\end{itemize}
%\item[(ii)]
%$b_0> 0$, $r_1\leq 2$ if $b_1>0$ and $r_2\leq \alpha$ if $b_2>0$;
\item[(ii)]
 $b_0> 0$ and all of the following hold.
\begin{itemize}
\item[(iia)]
if $b_1>0$, then $r_1\le(r_0+1)\vee2$;
\item[(iib)]
if $b_2> 0$, then  $r_2\le(r_0+\alpha-1)\vee\alpha$;
\item[(iic)]
 \begin{equation*}
 \begin{split}
 b_0>
 &\frac{b_1}{2} 1_{\{r_1=r_0+1>2\}}+\Gamma(\alpha) b_2 1_{\{r_2=r_0+\alpha-1>\alpha\}}.\\
 \end{split}
 \end{equation*}
\end{itemize}
\end{itemize}

From the above example we make the following observations.

\begin{remark}
\begin{itemize}
\item[{\normalfont(i)}]
There is no extinction if the process $X$ has a small enough negative drift together with small enough fluctuations near $0$. If $X$ has a positive drift, then the requirements on the fluctuations are weaker.
Extinction happens with a positive probability if $X$ has either a large enough negative drift or large enough fluctuations near $0$. Even if $X$ has a small positive drift near $0$, extinction can still happen with a positive probability if the fluctuations are large enough.
\item[{\normalfont(ii)}]
The explosion is caused by a large enough drift  associated with the function $\gamma_0$. The fluctuations of the process $X$ associated with the functions $\gamma_1$ and $\gamma_2$ cannot cause explosion. But large enough fluctuations can prevent the explosion from happening.
\item[{\normalfont(iii)}]
A large enough negative drift or large enough fluctuations near infinity can cause coming down from infinity. Even if the process $X$ has a positive drift, large enough fluctuations can still cause coming down from infinity.
On the other hand, the process $X$ with a moderate negative drift and moderate fluctuations near infinity stays infinite, and  if it allows large fluctuations, with a large enough positive drift it can still stay infinite.
%{\yellow ??Combining Corollaries \ref{comingdown_cor1} and \ref{comingdown_cor2}} we see that coming down from infinity can be caused by either a large enough negative drift or large fluctuations near infinity. On the other hand, process $X$ stays infinite if it has a large enough positive drift and relative small fluctuations near infinity.
\end{itemize}
\end{remark}

\begin{remark}
If $b_2=0$, then $X$ is a diffusion whose explosion behavior is characterized by Feller's criterion; see e.g. Corollary 4.4 of Cherny and Engelbert  (2005). One can check that the explosion/nonexplosion conditions in Example \ref{r1.1} are consistent with it.
\end{remark}

\begin{remark}
% In the following let $\int_{0+}$ and $\int^{\infty-}$ denote the integrations
%near $0$ and $\infty$, respectively. The following result follows from
Example \ref{r1.1}  recovers,  for the case with spectrally positive stable L\'evy measure specified in (\ref{pi}), the integral tests for extinction, explosion and coming down from infinity in Theorems 1.7, 1.9 and 1.11 of Li (2018),
which were proved using a very different approach.
Recall that the continuous-state polynomial branching process in Li (2018) is the process $X$ with power branching rate functions satisfying
$r_i=r, i=0,1,2$.
%{\red Pei-Sen: Could you specify those conditions in terms of $r, b_i$ and $\alpha$?}
By Example \ref{r1.1} we have for the continuous-state polynomial branching process,
\begin{itemize}
\item
 $\bP_x\{\tau^-_0<\infty\}>0$ for all $x>0$, i.e. extinction occurs,  if and only if
\[21_{\{b_1\ne 0\}}+\alpha 1_{\{b_1=0, b_2\ne 0\}}>r;\]
\item $\bP_x\{\tau^+_\infty<\infty\}>0$
for all $x>0$, i.e. explosion occurs,   if and only if $b_0>0$ and $r>1$;
\item
The process $X$ comes down from infinity  if and only if $ b_0\le 0$ and
\[1_{\{b_0\neq 0\}}+\alpha1_{\{b_0=0, b_2\ne0\}}+21_{\{b_0=0, b_1\ne0, b_2=0\}}< r;\]
\end{itemize}
which agree the  integral tests in Li (2018).
\end{remark}

The next example is on the finiteness of the weighted total population $S$ of $X$ introduced in Section \ref{population}. The next results follow from Remark \ref{rem_population}, Theorem \ref{t1.13} and Example \ref{r1.1}.

\bexample\label{example}
 Let $\gamma(x)=x^r$ for $0<r< \min\{r_0, r_1, r_2\}$ in Theorem \ref{t1.13}.
Observe that $\bP_x\{S=\bar{\tau}_0^-\wedge\bar{\tau}^+_\infty=\infty\}=1$ if and only if $\bP_x\{\bar{\tau}^-_0=\infty\}=1 $
and $\bP_x\{\bar{\tau}^+_\infty=\infty\}=1 $.
The conditions for
$\bP_x\{S<\infty\}=\bP_x\{\bar{\tau}^-_0\wedge\bar{\tau}^+_\infty<\infty\}=0$  for  $x>0$ can be found in Example \ref{r1.1}.
%(iii) holds and one of (i) and (ii) holds.
%\begin{itemize}
%\item[{\normalfont(i)}] $b_0\leq 0$ and all of the {\blue following} hold.
%\begin{itemize}
%\item[{\normalfont(ia)}]
%if $b_0<0$, then  $r_0-r\geq 1$;
%\item[(ib)]
%if $b_1>0$, then  $r_1-r\geq 2$;
%\item[{\normalfont(ic)}]
%if $b_2>0$, then $r_2-r\geq \alpha$.
%\end{itemize}
%\item[{\normalfont(ii)}]
% $b_0>0$ and all of the {\blue following} hold.
%\begin{itemize}
%\item[{\normalfont(iia)}]
%if $b_1>0$, then $r_1-r\geq (r_0-r+1)\wedge2$;
%\item[{\normalfont(iib)}]
%if $b_2>0$, then $r_2-r\geq (r_0-r-1+\alpha)\wedge\alpha$;
%\item[{\normalfont(iic)}]
% \begin{equation*}
% \begin{split}
% b_0{\blue >}
% &\frac{b_1}{2} 1_{\{r_1-r=r_0-r+1<2\}}+c_\alpha b_2 1_{\{ r_2-r=r_0-r+\alpha-1<\alpha\}}.\\
% \end{split}
% \end{equation*}
%\end{itemize}

%{\blue
%\item[{\normalfont(iii)}]
%either $b_0\leq 0$ or at least one of the {\blue following} is true.
%\begin{itemize}
%\item[{\normalfont(iiia)}]
%$b_0>0$ and $r_0-r\leq 1$.
%\item[{\normalfont(iiib)}]
%$b_0>0, \, r_0-r> 1 $ and at least one of the {\blue following}  hold.
%\begin{itemize}
%\item[{\normalfont(1)}]
% $b_1>0$ and $r_1>r_0+1$;
%\item[{\normalfont(2)}]
% $b_2> 0$ and $r_2>r_0+\alpha-1$;
%\item[{\normalfont(3)}]
%\[b_0<\frac{b_1}{2} 1_{\{r_1=r_0+1\}}+c_\alpha b_2 1_{\{r_2=r_0+\alpha-1\}}. \]
%\end{itemize}
%\end{itemize}
%}
%\end{itemize}

Similarly,
observe that $\bP_x\{\bar{\tau}^-_0\wedge\bar{\tau}^+_\infty<\infty\}>0$ if and only if $\bP_x\{\bar{\tau}^-_0<\infty\}>0 $
or $\bP_x\{\bar{\tau}^+_\infty<\infty\}>0 $.
Then the conditions for $\bP_x\{S<\infty\}>0$  can also be found in Example \ref{r1.1}.
%for all $x>0$ under each of the following set of conditions:
%\begin{itemize}
%\item[{\normalfont(i)}]
%$b_0\leq 0$ and at least one of the {\blue following} hold.
%\begin{itemize}
%\item[{\normalfont(ia)}]
% $b_0<0$ and $r_0-r<1$;
%\item[{\normalfont(ib)}]
% $b_1> 0$ and $r_1-r<2$;
%\item[{\normalfont(ic)}]
% $b_2> 0$ and $r_2-r<\alpha$.
%\end{itemize}
%\item[{\normalfont(ii)}] $b_0> 0$ and at least one of the following hold.
%\begin{itemize}
%\item[{\normalfont(iia)}]
% $b_1>0$ and $r_1-r<(r_0-r+1)\wedge 2$;
%\item[{\normalfont(iib)}]
% $b_2> 0$ and $r_2-r<(r_0-r+\alpha-1)\wedge \alpha$;
%\item[{\normalfont(iic)}]
% \begin{equation*}
% \begin{split}
% b_0<
% &\frac{b_1}{2} 1_{\{r_1-r=r_0-r+1<2\}}+c_\alpha b_2 1_{\{r_2-r=r_0-r+\alpha-1<\alpha\}}.\\
% \end{split}
% \end{equation*}
%\end{itemize}
%{\blue
%\item[{\normalfont(iii)}]
%if
%$b_0>0, r_0>1$ and all of the {\blue following} hold.
%\begin{itemize}
%\item[{\normalfont(iiia)}]
%if $b_1>0$, then $r_1\leq r_0+1$;    %either  $r_1<r_0+1$ or  $r_1=r_0+1$; % and $b_0$ is much bigger than $b_1$.
%\item[{\normalfont(iiib)}]
%if $b_2> 0$, then $r_2\leq r_0+\alpha-1$;                 %either $r_2<r_0+\alpha-1$ or  $r_2=r_0+\alpha-1$; % and $b_0$ is much bigger than $b_2$.
%\item[{\normalfont(iiic)}]
% \[b_0>\frac{b_1}{2} 1_{\{r_1=r_0+1\}}+c_\alpha b_2 1_{\{r_2=r_0+\alpha-1\}}.\]
%\end{itemize}}
%\end{itemize}
% In addition, $\bP_x\{S<\infty\}=1 $ for all $x>0$  under condition (i).
\eexample

\section{Existence and uniqueness of solution}\label{main}

\setcounter{equation}{0}

In this section we find conditions on the functions $\gamma_i, i=0, 1,2$ under which SDE {\rm(\ref{sdeB})} has a pathwise unique solution $X$, and  consequently $X$  is a Markov process.  For this purpose, we only need the functions $\gamma_i, i=0, 1,2$ to be locally Lipschitz because we only consider the solution up to the first time of hitting $0$ or explosion.

%the construction of the process and show that
%the strong Markov property for $X$ is satisfied under certain conditions.

\btheorem\label{thm00}
Suppose that the functions $\gamma_i,$ $i=0,1,2$ are locally Lipschitz; i.e.,
for each closed interval $A\subset (0,\infty)$, there is a constant $c(A)>0$ so that for any $x,y\in A$,
 \beqnn
|\gamma_0(x)-\gamma_0(y)|+|\gamma_1(x)-\gamma_1(y)|+ |\gamma_2(x)-\gamma_2(y)|\le c(A)|x-y|.
 \eeqnn
 Then
\begin{itemize}
\item[{\normalfont(i)}]
 For any initial value $X_0=x\ge0$, there exists
a pathwise unique solution (defined at the beginning of Section \ref{main_result}) to SDE {\rm(\ref{sdeB})}.
\item[{\normalfont(ii)}]
If in addition, $\gamma_2$ is an increasing function, then for any $y\ge x\in [0,\infty)$ and solutions $X^x:= (X_t^x)_{t\ge 0}$ and $X^y:= (X_t^y)_{t\ge 0}$ to SDE {\rm(\ref{sdeB})} with $X^x_0=x$ and $X^y_0=y$, we have
%Let $X^x:= (X_t^x)_{t\ge 0}$ be the solution to SDE {\rm(\ref{sdeB})} with $X_0=x$. If , then $y\ge x\in [0,\infty)$ implies that
$$\bP\{X_t^y\ge X_t^x \,\,\, \text{ for all} \,\,\, t\ge 0\}=1.$$
 \end{itemize}
\etheorem
\proof  %%%%{\blue Referee's question to be fixed by Peisen }
(i) We prove the result by an approximation argument. For each $n\ge 1$ and $i=0,1,2$ define
\beqnn
\gamma_i^n(x):=\left\{
\begin{array}{lcl}
 \gamma_i(n), & & n< x< \infty, \cr
 \gamma_i(x), & & 1/n\le x\leq n, \cr
 \gamma_i(1/n), & & 0\le x<  1/n.
\end{array} \right.
 \eeqnn
 By pp.245--246 in Ikeda and Watanabe (1989),  for each $n\ge1$ there is a unique strong solution $(\xi^n_t)_{t\ge 0}$ to
 \beqlb\label{e0.2}
\xi^n_t
\ar=\ar
x+\int_0^t\gamma_0^n(\xi^n_s)\dd s
+\int_0^t\int_0^{\gamma_1^n(\xi^n_s)}
W(\dd s,\dd u)\cr
\ar\ar\qquad+\int_0^t\int_{0}^\infty\int_0^{\gamma_2^n(\xi^n_{s-})}
z\tilde{N}(\dd s, \dd z, \dd u).
 \eeqlb
For $m,n\ge1$ define stopping time
$$\tau^n_m:= \inf\{t\ge 0: \xi^n_t\geq m \text{\,\, or \,\,}\xi^n_t\leq 1/m\}.$$
 Then we have $\xi^n_t = \xi^m_t$ for $t\in [0,\tau_{m\wedge n}^{m\wedge n})$ and $\tau^{n+i}_n=\tau^n_n$, $i=1,2,\ldots$ Clearly, the sequence of stopping times $\{\tau_n^n\}$ is increasing in $n$.
Let $\tau := \lim_{n\to \infty} \tau^n_n$.
We define the process $X:=(X_t)_{t\ge0}$ by $X_t = \xi^n_t$ for $t\in [0,\tau^n_n)$ and $X_t = \limsup_{n\to \infty}\xi^n_{\tau^n_n}$ for $t\in [\tau,\infty)$. Then
$$\tau^n_n:= \inf\{t\ge 0: X_t\geq n \text{\,\, or\,\,} X_t\leq 1/n\}$$
 and $X$ is a solution of (\ref{sdeB}). Since the pathwise uniqueness of the solution holds for (\ref{e0.2}) in the time interval $[0,\tau^n_n)$ for each $n\ge 1$, there exists a pathwise unique solution to {\rm(\ref{sdeB})}.

(ii) Let $(\xi_n^x(t))_{t\geq 0}$ denote the solution
of (\ref{e0.2}) to indicate its dependence on the initial state. To apply Theorem 2.2 in Dawson and Li (2012), we identify the notation in Dawson and Li (2012) with that in this paper in the following equations, where the notation on the left hand sides comes from Dawson and Li (2012) and that on the right hand sides is from the present paper. %the measure $\pi(\dd u)=\dd u$, $g_1(x,z,u)\equiv0$
 \begin{equation*}
 \begin{split}
&E=(0,\infty),~ U_0=(0,\infty)^2, ~\pi(\dd u)=\dd u, ~  g_1(x,z,u)\equiv0, \\
&
\mu_0(\dd z,\dd u)=\pi(\dd z)\dd u, ~
\tilde{N}_0(\dd s, \dd z,\dd u)=\tilde{N}(\dd s, \dd z,\dd u)
 \end{split}
 \end{equation*}
and
 \beqnn
b(x)=b_1(x)=\gamma_0^n(x),~\sigma(x,u)=1_{\{u\le \gamma_1^n(x)\}},~
g_0(x, z, u)=z1_{\{u\le \gamma_2^n(x)\}}.
 \eeqnn
Then conditions (2.a,b,c) in Dawson and Li (2012) are satisfied due to the Lipschitz
properties of $\gamma_i^n$ for $i=0,1,2$.
Let
$$l_0(x,y,u):=1_{\{u\le \gamma_2(x)\}}-1_{\{u\le \gamma_2(y)\}}.$$
Since the function $\gamma_2(x)$ is non-decreasing in $x$, then  for $x<y$ we have
 \beqnn
I(x,y)
 \ar:=\ar
\int_0^\infty\dd u \int_0^1\frac{l_0(x,y,u)^2(1-t)1_{\{|l_0(x,y,u)|\le n\}}}{|(x-y)+tl_0(x,y,u)|}\dd t \cr
 \ar=\ar
\int_{\gamma_2(x)}^{\gamma_2(y)}
\dd u \int_0^1\frac{1-t}{|(x-y)-t|}\dd t \cr
 \ar\le\ar
(\gamma_2(y)-\gamma_2(x))(\ln(y-x+1)-\ln(y-x))<\infty.
 \eeqnn
Similarly, $I(x,y)<\infty$ for all $x\ge y$.
Then
condition (2.d) of Theorem 2.2 in Dawson and Li (2012) holds.
Now for any $y\ge x\ge 0$, by  Theorem 2.2 in Dawson and Li (2012) we can show that $\xi_n^y(t)\ge \xi_n^x(t)$ a.s. for all $n$ and $t\ge 0$. Consequently,  $X_t^y\ge X_t^x$ a.s. for all  $t\ge 0$.\qed

%{\red
%Observe from the proof that the monotonicity of $\gamma_2$ is not necessary for the comparison result  in %Theorem \ref{thm00} (ii).
%}

%(ii) Let $(\xi_n^x(t))_{ t\geq 0}$ denote the solution
%of (\ref{e0.2}) to indicate its dependence on the initial state. To applying Theorem 5.5 in Fu and Li (2010), we define {\blue $g_1(x, y)=0$, $\sigma(x)=\sqrt{\gamma_1(x)}$, $b(x)=\gamma_0(x)$, $U_0=(0, \infty)^2$, $\tilde{N}_0(ds, dy)=N(ds, dz, du)$, $\mu_0(dy)=\pi(dz)du$ and $g_0(x,y)=g_0(x, z, u)=z1_{\{u\le \gamma(x)\}}.$ Then Condition (5.a) and (5.c) in Fu and Li (2010) follows from the Lipschitz property of $\gamma_i^n$ and monotonicity of $\gamma_2$. The $\sigma$-finite property of $\pi$ gives (5.b). Since $\gamma_i^n$ is bounded, (2.a) and (2.b) holds.}
%For any $y\ge x\ge 0$, by the Theorem 5.5 in Fu and Li (2010) we can show that $\xi_n^y(t)\ge \xi_n^x(t)$ a.s. for every $t\ge 0$, and so $X_t^y\ge X_t^x$ a.s. %for every $t\ge 0$.\qed

%{\blue Construct a sequence of discrete nonlinear branching process with limit {\rm(\ref{sdeB})}.}

 Throughout the rest of this paper, we always assume that SDE {\rm(\ref{sdeB})} has a unique weak solution which is a Markov process.

\begin{remark}
The solution to SDE {\rm(\ref{sdeB})} also arises as the weak limit in the Skorokhod space $D([0, \infty), \mathbb{R}_+) $ for a sequence of discrete-state and continuous-time Markov chains that can be interpreted as discrete-state branching processes with population dependent branching rates; see Li et al. (2018) for more details.
\end{remark}

\section{Foster-Lyapunov criteria for extinction and explosion}\label{Foster_Lyapunov}

\setcounter{equation}{0}

In this section, we first present Foster-Lyapunov criteria type results for the process $X$ which generalize a similar result for Markov chains; see Chen (2004, p.84).

Let $C^2[0,\infty)$ be the space of twice continuously
differentiable functions on $[0,\infty)$.
Define the operator $L$ on $C^2[0,\infty)$ by
 \beqnn
Lg(y)
 \ar:=\ar
\gamma_0(y)g'(y)+\frac{1}{2}\gamma_1(y)g''(y) \cr
 \ar\ar\quad
+\gamma_2(y)\int_0^\infty(g(y+z)-g(y)-zg'(y))\pi(\dd z).
 \eeqnn

\blemma\label{lemA}
 Given $a\geq 0$,
let $g\in C^2[0,\infty)$ be a non-negative function  satisfying  the following
conditions:
\begin{itemize}
\item[{\normalfont(i)}] $\sup_{y\in [a, b)} | Lg(y)|< \infty$ for all $b>a$,  i.e.,  $Lg$ is locally bounded on $[a, \infty) $;
\item[{\normalfont(ii)}] $\sup_{y\in [a, \infty)} g(y)< \infty$;
\item[{\normalfont(iii)}] $g(a)>0$ and $\lim_{y\goto\infty} g(y)=0$;
\item[{\normalfont(iv)}] For all $b>a$, there is a constant $d_b>0$ so that $Lg(y)\geq d_b g(y)$ for all  $y\in(a,b)$.
\end{itemize}
Then for any $x>a$, we have
 \beqlb\label{con0}
\bP_x\{\tau^-_a<
\infty\}\ge g(x)/g(a).
 \eeqlb
\elemma
\proof
For any $b>x>a$, by It\^o's formula and conditions (i) and (ii), we have
\beqnn  g(X_{t\wedge\tau^-_a\wedge\tau^+_b})=g(x)+\int_0^{t\wedge\tau^-_a\wedge\tau^+_b }Lg(X_s)\dd s+\text{mart}.\eeqnn
Taking expectations on both sides, we have
\beqnn  \bE_x \big[g(X_{t\wedge\tau^-_a\wedge\tau^+_b})\big]=g(x)+\int_0^t\bE_x  \Big[Lg(X_s)1_{\{s< \tau^-_a\wedge\tau^+_b\}}\Big]\dd s.\eeqnn
By integration by parts,
 \beqnn
 \ar\ar
\int_0^\infty \e^{-d_bt}
\bE_x \Big[Lg(X_t)1_{\{t<\tau^-_a\wedge\tau^+_b\}}\Big]\dd t \cr
 \ar\ar\quad=
\int_0^\infty \e^{-d_bt}\dd \bE_x \Big[g(X_{t\wedge\tau^-_a\wedge\tau^+_b})\Big]\cr
 \ar\ar\quad
=d_b\int_0^\infty \e^{-d_bt} \bE_x
\Big[g(X_{t\wedge\tau^-_a\wedge\tau^+_b})\Big]\dd t-g(x).
 \eeqnn
Then by (iv),
 \beqnn
 \ar\ar
d_b\int_0^\infty \e^{-d_bt} \bE_x
\Big[g(X_{t\wedge\tau^-_a\wedge\tau^+_b})\Big]\dd t-g(x) \cr
 \ar\ar\qquad
\geq d_b\int_0^\infty \e^{-d_bt}\bE_x \Big[g(X_t)1_{\{t<
\tau^-_a\wedge\tau^+_b\}}\Big]\dd t. \eeqnn
It follows that
 \beqnn
g(x)
 \ar\leq \ar
d_b\int_0^\infty \e^{-d_bt}\bE_x \Big[g(X_{\tau^-_a\wedge\tau^+_b})1_{\{t\ge\tau^-_a\wedge\tau^+_b\}}\Big]\dd t \cr
% \ar\leq\ar
%d_b\int_0^\infty \e^{-d_bt}\bE_x \Big[g(X_{\tau^-_a})1_{\{t\ge\tau^-_a\}}\Big]\dd t \cr
% \ar\ar\quad
%+d_b\int_0^\infty \e^{-d_bt}\bE_x \Big[g(X_{\tau^+_b})1_{\{t\ge\tau^+_b\}}\Big]\dd t \cr
 \ar\leq\ar
g(a)\bP_x\{\tau^-_a<\infty\}+\sup_{y\ge b}g(y).
 \eeqnn
Inequality (\ref{con0}) thus follows by letting $b\goto\infty$ and (iii).
\qed

 The proof for the next lemma is similar to that of Lemma \ref{lemA} and we omit it.

\blemma\label{lem1}
Given $0< x< b$, suppose there exist constants $a\in[0,x)$, $d>0$ and
a function $g\in C^2[0,\infty)$  satisfying  the following
conditions:
\begin{itemize}
\item[{\normalfont(i)}] $\sup_{y\in [a, b]} | Lg(y)|< \infty$;
\item[{\normalfont(ii)}] $\sup_{y\in [a, b)} |g(y)|< \infty$;
\item[{\normalfont(iii)}] $g(a)=0$ and $g(x)>0$;
\item[{\normalfont(iv)}] $Lg(y)\geq d g(y)$ for all  $y\in[a,b]$.
\end{itemize}
Then we have
$\bP_x\{\tau^+_b<\infty\}>0$.
\elemma

%\proof  {\red Remove the proof???}
%Suppose that $\bP_x\{\tau^+_b< \infty\}=0$. Then by It\^o's formula and conditions (i) and (ii), we have
%\[g(X_{t\wedge\tau^-_a})=g(x)+\int_0^{t\wedge\tau^-_a} Lg(X_s)\dd s+\text{mart}.\]
%Taking expectations on both sides we have
%\[\bE_x \big[g(X_{t\wedge\tau^-_a})\big]=g(x)+\int_0^t\bE_x  \Big[Lg(X_s)1_{\{s< \tau^-_a\}}\Big]\dd s.\]
%By integration by parts,
% \begin{equation*}
% \begin{split}
%\int_0^\infty \e^{-dt}
%\bE_x \big[Lg(X_t)1_{\{t<\tau^-_a\}}\big]\dd t
%&=
%\int_0^\infty \e^{-dt}\dd \bE_x \big[g(X_{t\wedge\tau^-_a})\big]\\
%&=d\int_0^\infty \e^{-dt} \bE_x
%\big[g(X_{t\wedge\tau^-_a})\big]\dd t-g(x).
% \end{split}
% \end{equation*}
%Then by (iv),
%\[d\int_0^\infty \e^{-dt} \bE_x \big[g(X_{t\wedge\tau^-_a})\big]\dd t-g(x)
%\geq d\int_0^\infty \e^{-dt}\bE_x \Big[g(X_t)1_{\{t<
%\tau^-_a\}}\Big]\dd t. \]
%It follows that
%\beqnn
%g(x)\leq d\int_0^\infty \e^{-dt}\bE_x \Big[g(X_{\tau^-_a})1_{\{t\ge\tau^-_a\}}\Big]\dd t= 0,
%\eeqnn
%which contradicts (iii).
%\qed

 As applications of Lemmas \ref{lemA} and \ref{lem1}, we prove Propositions \ref{thmA} and \ref{L0.2} in this section.

%\subsection{Proofs of Propositions \ref{thmA} and \ref{L0.2}}~~~~\\

\noindent{\it Proof of Proposition \ref{thmA}.} %{\red Shorten the proof???}}
(i) Let $g(y)=\e^{-\lambda y}$ with
$\lambda>0$ large enough.  Note that $g$ satisfies the conditions of
Lemma \ref{lemA}. Then by Lemma \ref{t4.1}, we have uniformly for all $a<y<b$,
\beqlb\label{1.1}
Lg(y)
% \ar=\ar
%-\lambda\gamma_0(y)\e^{-\lambda y}
%+\frac{\lambda^2}{2}\gamma_1(y)\e^{-\lambda y} \cr
% \ar\ar
%\quad+\gamma_2(y)\int_0^\infty (\e^{-\lambda(y+z)}-\e^{-\lambda y}+\lambda z\e^{-\lambda y})\pi(\dd z) \cr
 \ar\geq\ar
\lambda \e^{-\lambda y}\Big\{-\sup_{ a\le z\le b}(\gamma_0(z)\vee 0)+\frac{\lambda}{2}\inf_{a\le z\le b} \gamma_1(z) \cr
 \ar\ar\quad
+\lambda\inf_{a\le z\le b}\gamma_2(z)\int_0^\infty z^2\pi(\dd z)\int_0^1 \e^{-\lambda zu}(1-u)\dd u\Big\}.
 \eeqlb
Observe that
 \beqnn
 \ar\ar
\la\int_0^\infty z^2\pi(\dd z)\int_0^1 \e^{-\lambda zu}(1-u)\dd u
 \ge
2^{-1}\la\int_0^\infty z^2\pi(\dd z)\int_0^{1/2} \e^{-\lambda zu}\dd u \cr
 \ar\ar\quad\ge
2^{-1}\int_0^\infty z(1- \e^{-\la z/2})\pi(\dd z)
\ge
2^{-1}(1- \e^{-1/2})\int_{1/\la}^\infty z\pi(\dd z)
 \eeqnn
converges   to $\int_0^\infty z\pi(\dd z)=\infty$ as $\lambda\to\infty$.
It then follows that for each $b>a$ there is a constant $d_b(\lambda)>0$ so that
 \beqlb\label{1.2}
Lg(y)\ge d_b(\lambda)\e^{-\la y},\qquad a<y<b
 \eeqlb
as $\la$ large enough.
Thus by Lemma \ref{lemA}, for $x>a$ and large enough $\la$,
\begin{equation}\label{thmAa}
\bP_x\{\tau^-_a<\infty\}\geq \e^{-\lambda (x-a)}>0,
\end{equation}
which gives \eqref{Condition_a}.

(ii) Suppose that there is a constant $c>0$ so that $\gamma_0(y)\le 0$ for
all $y\ge c$. Similar to the argument in \eqref{1.1} and \eqref{1.2},
given any $\lambda>0$, uniformly for $c\vee a<y<b$, we have
\beqnn
Lg(y)
 \ar\geq\ar
\frac{\lambda^2}{2}\gamma_1(y)\e^{-\lambda
y}+\lambda^2\gamma_2(y)\e^{-\lambda y}\int_0^\infty z^2\pi(\dd z)\int_0^1
\e^{-\lambda zu}(1-u)\dd u \cr
% \ar\geq\ar
%\frac{\lambda^2}{2}\e^{-\lambda
%y}\inf_{c\vee a<y<b} \gamma_1(z) \cr
% \ar\ar\quad
%+\lambda^2\e^{-\lambda y}\inf_{c\vee a<y<b} \gamma_2(z)\int_0^\infty z^2\pi(\dd z)\int_0^1
%\e^{-\lambda zu}(1-u)\dd u \cr
 \ar\geq\ar
d_b'(\lambda) \e^{-\lambda y}
\eeqnn
for some constant $d_b'(\la)>0$.
It follows again from Lemma
\ref{lemA} that  for all $\lambda>0$,
\beqnn  \bP_x\{\tau^-_l<\infty\}\geq \e^{-\lambda (x-l)}>0, \qquad x>l\ge c\vee a.\eeqnn
Letting $\lambda\goto 0$,  we have
\begin{equation}\label{thmAb}
\bP_x\{\tau^-_l<\infty\}=1,\qquad x>l\ge c\vee a.
\end{equation}
It follows from  \eqref{thmAa} that for large enough $\la$,
 \beqlb\label{1.3}
\bP_x\{\tau^-_a<\infty\}\geq \e^{-\lambda (x-a)}, \qquad x>a.
 \eeqlb

For any $x>a>0$ and $t>0$, combining (\ref{thmAb}) and \eqref{1.3}, by the strong Markov property, we have
 \beqlb\label{thmAc}
 \ar\ar
\bP_x\{\tau^-_a<\infty\} \cr
% \ar=\ar
%\bP_x\{\tau^-_a<t\}+\int_a^\infty
%\bP_x\big\{t\leq\tau^-_a<\infty, \, X_t\in \dd z\}\bP_z\{\tau^-_a<\infty \big\} \cr
 \ar=\ar
\bP_x\{\tau^-_a<t\}+\int_a^{c\vee a}
\bP_x\big\{t\leq\tau^-_a<\infty, \, X_t\in \dd z\big\}
\bP_z\{\tau^-_a<\infty \} \cr
 \ar\ar
+\int_{c\vee a}^\infty
\bP_x\big\{t\leq\tau^-_a<\infty, \, X_t\in \dd z\big\}
\bP_z\{\tau^-_a<\infty \} \cr
 \ar\ge\ar
\bP_x\{\tau^-_a<t\}+\int_a^{c\vee a}
\bP_x\big\{t\leq\tau^-_a<\infty, \, X_t\in \dd z\big\}
\bP_{c\vee a}\{\tau^-_a<\infty \} \cr
 \ar\ar
+\int_{c\vee a}^\infty
\bP_x\big\{t\leq\tau^-_a<\infty, \, X_t\in \dd z\big\}
\bP_{c\vee a}\{\tau^-_a<\infty \} \cr
% \ar\geq\ar
%\bP_x\{\tau^-_a<t\}+\e^{-\lambda(c\vee a-a)}\int_a^\infty \bP_x\{t\leq\tau^-_a, \, X_t\in \dd z\} \cr
 \ar\geq\ar
\bP_x\{\tau^-_a<t\}+\e^{-\lambda(c\vee a-a)}(1-\bP_x\{\tau^-_a<t\}).
 \eeqlb
Letting $t\goto\infty$ in (\ref{thmAc}), we have
$  \bP_x\{\tau^-_a<\infty\}=1$.
The desired result then follows.

(iii)
For any small enough $\ep>0$, let
 \beqnn
A_n:=\Big\{\tau^-(\ep^{n+1})<\infty,
\tau^+_\ep\circ\theta_{\tau^-(\ep^{n+1})} <\tau^-{(\ep^{n+2})}\circ\theta_{\tau^-(\ep^{n+1})} \Big\},~ n\ge1.
 \eeqnn
Since $\gamma_0(y)\le0$ for all $y\in\mbb{R}$, then
$(X_t)_{t\ge0}$ is a supermartingale, which implies
 \beqnn
\ep^{n+1}=X_{\tau^-(\ep^{n+1})}
\ge\mbb{E}_{\ep^{n+1}}\Big[X_{\tau^+_\ep\wedge\tau^-{(\ep^{n+2})}}\Big]
\ge\ep\mbb{P}_{\ep^{n+1}}\{\tau^+_\ep< \tau^-_{\ep^{n+2}}\}
 \eeqnn
by optional stopping.
Thus,
 \beqnn
\bP_x\{A_n\}\le \mbb{E}_x\Big[\mbb{P}_{\tau^-(\ep^{n+1})}\big\{\tau^+_\ep< \tau^-_{\ep^{n+2}}\big\}\Big]\le\ep^n.
 \eeqnn
It follows from the Borel-Cantelli lemma that
$  \bP_x\{A_n \,\, \text{i.o.}\}=0$.
Therefore, by Proposition \ref{thmA} (ii), we have $\bP_x$-a.s. \,  $X_t<\ep$  for all $t$ large enough and the desired result follows.
\qed

\noindent{\it Proof of Proposition \ref{L0.2}.}
Observe that there is a constant $b'>0$ so that $\int^{b'}_0 z^2 \pi(\dd z)>0$.
Let
 \beqnn
m_0:=\sup_{y\in[a,b]}|\gamma_0 (y)|<\infty,~
m_1=\inf_{y\in[a,b]}\gamma_1 (y)
~\mbox{ and }~
m_2=\inf_{y\in[a,b]}\gamma_2 (y).
 \eeqnn
Since $m_1\vee m_2> 0$, there exists a large enough constant $c>0$ so that
 \beqnn
-cm_0+\frac{1}{2}c^2m_1+\frac{1}{2}c^2m_2\int^{b'}_0 z^2 \pi(\dd z)\ge 1.
 \eeqnn
Let $g$ be a convex function, i.e. $g''(y)\ge 0$, satisfying
$g(y)=\e^{cy}-\e^{ca}$ for $y\in [a, b+b']$ and $g''(y)= 0$ for $y> b+b'+1$.
Then by Lemma \ref{t4.1}, it is easy to see that
\beqnn
 \ar\ar
\int^\infty_0 (g(y+z)-g(y)-zg'(y))\pi(\dd z) \cr
 \ar=\ar
\int^{b+b'+1}_0 (g(y+z)-g(y)-zg'(y))\pi(\dd z) \cr
 \ar\ar\quad
+\int^{\infty}_{b+b'+1} (g(y+z)-g(y)-zg'(y))\pi(\dd z) \cr
 \ar\le\ar
\frac{1}{2}\sup_{y\in[a, b+b'+1]} g''(y)\int^{b+b'+1}_0 z^2 \pi(\dd z),
\eeqnn
which implies that condition (i) in Lemma \ref{lem1} is satisfied.
Observe that for any $y\in [a,b]$, we have
\beqnn
 \ar\ar
\int^\infty_0 (g(y+z)-g(y)-zg'(y))\pi(\dd z) \cr
 \ar\ar\quad\ge
\int^{b'}_0 (g(y+z)-g(y)-zg'(y))\pi(\dd z) \cr
\ar\ar\quad=
\e^{cy}\int^{b'}_0 (\e^{cz}-1-cz) \pi(\dd z)
\ge
\frac{1}{2}c^2\e^{cy} \int^{b'}_0 z^2 \pi(\dd z).
\eeqnn
Therefore, for any $y\in[a,b]$, we have
\beqnn
Lg(y)
\ar=\ar
\gamma_0(y)g'(y)+\frac12\gamma_1(y)g''(y) \cr
 \ar\ar
+\gamma_2(y)\int_0^\infty(g(y+z)-g(y)-zg'(y))\pi(\dd z) \cr
\ar\ge\ar
\gamma_0(y)c\e^{cy}+\frac{c^2}{2}\gamma_1(y)\e^{cy}+\frac{c^2}{2}\gamma_2(y)\e^{cy} \int^{b'}_0 z^2 \pi(\dd z) \cr
\ar\ge\ar
\e^{cy}\Big[-cm_0+\frac{c^2}{2}m_1+\frac{c^2}{2}m_2\int^{b'}_0 z^2 \pi(\dd z)\Big]
\ge
 \e^{Cy}\ge g(y).
\eeqnn
Applying Lemma \ref{lem1} yields $\bP_x\{\tau^+_b< \infty\}>0$.
\qed

\section{Proofs of the main results in Section \ref{main_result}}\label{proofs}
\setcounter{equation}{0}

%\subsection{Proofs of Theorem \ref{t2.5}  and Corollaries \ref{t1.1}, \ref{thm_survive} and \ref{thm_extinction}}

Recall the definitions of $H_a$ and $G_a$ in \eqref{1.6} and \eqref{1.7}, respectively.  We now present the martingales we use to show the main results on extinction, explosion and coming down from infinity. It is remarkable that such a martingale is enough to show all the main results in this paper. Some other forms of martingales can only be used to prove  partial results.

\blemma\label{t3.3}
For $b>\ep>c>0$ let $T:=\tau^-_c\wedge\tau^+_b$.
Then the process  $X^{1-a}_{t\wedge T}\exp\Big\{\int_0^{t\wedge T}G_a(X_s)\dd s\Big\}$ is an $(\mcr{F}_t)$-martingale and
 \beqnn
\bE_\ep \bigg[X^{1-a}_T\exp\Big\{\int_0^TG_a(X_s)\dd s\Big\}\bigg]
\le\ep^{1-a}
 \eeqnn
for $a\neq1$.
\elemma
\proof  %{\red Shorten the proof???}
 By It\^o's formula, we can see that
 \beqnn
X_t^{1-a}
 \ar=\ar
X_0^{1-a}
-\int_0^tG_a(X_s)X_s^{1-a}\dd s
+(1-a)\int_0^t\int_0^{\gamma_1(X_s)}X_s^{-a}W(\dd s,\dd u) \cr
 \ar\ar
+\int_0^t\int_0^\infty\int_0^{\gamma_2(X_{s-})}
\big[(X_{s-}+z)^{1-a}-X_{s-}^{1-a}\big]\tilde{N}(\dd s,\dd z, \dd u),
 \eeqnn
and it then follows from the integration by parts formula
(see e.g. Protter (2005, p. 68)) that
 \beqnn
 \ar\ar
X^{1-a}_{t}\exp\left\{\int_0^{t}G_a(X_s)\dd s\right\} \cr
 \ar=\ar
X_0^{1-a}
+\int^t_0X_s^{1-a}\exp\left\{\int^s_0 G_a(X_u)\dd u\right\}G_a(X_s)\dd s \cr
 \ar\ar
+\int^t_0\exp\left\{\int^s_0 G_a(X_u)\dd u\right\}\dd (X_s^{1-a})\cr
 \ar=\ar
X_0^{1-a}
+\mbox{local mart.}
 \eeqnn
Therefore,
 \beqlb\label{3.1}
t\mapsto X^{1-a}_{t\wedge T}\exp\Big\{\int_0^{t\wedge T}G_a(X_s)\dd s\Big\}
 \eeqlb
is a local martingale.
By Protter (2005, p. 38), \eqref{3.1} is a martingale if
 \beqlb\label{3.2}
\bE_\ep \bigg[\sup_{t\in[0,\delta]}
X^{1-a}_{t\wedge T}\exp\Big\{\int_0^{t\wedge T}G_a(X_s)\dd s\Big\}\bigg]<\infty.
 \eeqlb
for each $\delta>0$.
Observe that  for $0\leq t\leq \delta$
 \beqnn
\exp\Big\{\int_0^{t\wedge T} G_a(X_s)\dd s\Big\}
 \eeqnn
is    uniformly bounded from above   by a positive constant.
Then \eqref{3.2} is obvious for $a>1$.
In the following we consider the case $a<1$.
By the Burkholder-Davis-Gundy inequality,  we have
 \beqlb\label{3.3}
  \ar\ar
\bE_\ep \bigg[\sup_{t\in[0,\delta]}
\Big|\int_0^{t\wedge T}\int_0^{\gamma_1(X_{s-})}
W(\dd s,\dd u)\Big|^2\bigg] \cr
 \ar\le\ar
C\bE_\ep \bigg[\int_0^{\delta\wedge T}\dd s\int_0^{\gamma_1(X_{s-})} \dd u\bigg]
\le C\delta\sup_{x\in[0,b]}\gamma_1(x)
 \eeqlb
and
 \beqlb\label{3.4}
 \ar\ar
\bE_\ep \bigg[\sup_{t\in[0,\delta]}\Big|\int_0^{t\wedge T}\int_{0}^1\int_0^{\gamma_2(X_{s-})} z\tilde{N}(\dd s, \dd z, \dd u)\Big|^2\bigg] \cr
 \ar\ar\quad\le
C\bE_\ep \bigg[\int_0^{\delta\wedge T}\dd s\int_0^1z^2\pi(\dd z)\int_0^{\gamma_2(X_{s-})} \dd u\bigg] \cr
 \ar\ar\quad\le
C\delta\int_0^1z^2\pi(\dd z)\sup_{x\in[0,b]}\gamma_2(x).
 \eeqlb
Observe that
 \beqlb\label{3.5}
 \ar\ar
\bE_\ep \bigg[\sup_{t\in[0,\delta]}\Big|\int_0^{t\wedge T}\int_1^\infty\int_0^{\gamma_2(X_{s-})} z\tilde{N}(\dd s, \dd z, \dd u)\Big|\bigg] \cr
 \ar\le\ar
\bE_\ep \bigg[\sup_{t\in[0,\delta]}\Big|\int_0^{t\wedge T}\int_{1}^\infty\int_0^{\gamma_2(X_{s-})} zN(\dd s, \dd z, \dd u)\Big|\bigg] \cr
 \ar\ar
+\bE_\ep \bigg[\sup_{t\in[0,\delta]}\Big|\int_0^{t\wedge T}\dd s\int_1^\infty
z\pi(\dd z)\int_0^{\gamma_2(X_{s-})}\dd u\Big|\bigg]
\cr
 \ar\le\ar
2\delta\int_1^\infty z\pi(\dd z)\sup_{x\in[0,b]}\gamma_2(x).
 \eeqlb
It then follows from \eqref{sdeB} and \eqref{3.3}--\eqref{3.5} that
$\bE_\ep[\sup_{t\in[0,\delta]}
X_{t\wedge T}]<\infty$, which implies
\eqref{3.2}.
Now by Fatou's lemma, we get
 \beqnn
\bE_\ep \bigg[X^{1-a}_T\exp\Big\{\int_0^TG_a(X_s)\dd s\Big\}\bigg]
 \ar=\ar
\bE_\ep \bigg[\lim_{t\to\infty}X^{1-a}_{t\wedge T}\exp\Big\{\int_0^{t\wedge T}G_a(X_s)\dd s\Big\}\bigg] \cr
 \ar\le\ar
\lim_{t\to\infty}\bE_\ep \bigg[X^{1-a}_{t\wedge T}\exp\Big\{\int_0^{t\wedge T}G_a(X_s)\dd s\Big\}\bigg]\cr
\ar=\ar\ep^{1-a},
 \eeqnn
which finishes the proof.
\qed

\noindent{\it Proof of Theorem \ref{t2.5}.}
(i) In the present proof for $n=2,3,\ldots$, let $T_n:=\tau^-(\ep^n)\wedge\tau^+_b$ for small enough $0<\ep<b$. It follows from Lemma \ref{t3.3} that
 \beqnn
\ep^{1-a}
 \ar\geq\ar
\bE_{\ep} \Big[X^{1-a}_{\tau^-(\ep^{n})\wedge\tau^+(b)}\exp\big\{-(\ln \ep^{-n})^r  (\tau^-(\ep^{n})\wedge\tau^+(b) \big\}\Big]\cr
 \ar\geq\ar
\bE_{\ep} \Big[X^{1-a}_{\tau^-(\ep^{n})}\exp\big\{-(\ln \ep^{-n})^r d_n  \big\}1_{\{\tau^-(\ep^{n})<\tau^+_b\wedge d_n\}}\Big] \cr
 \ar=\ar
\ep^{(1-a)n}  \exp\{\ln \ep^{n(a-1)/2}\}
\bP_\ep\big\{\tau^-(\ep^{n})<\tau^+_b\wedge d_n \big\},
 \eeqnn
where
\beqnn
d_n:=\frac{\ln \ep^{n(a-1)/2} }{-(\ln \ep^{-n})^r}
=\frac{n(a-1)/2\ln\ep^{-1}}{n^r(\ln\ep^{-1})^r}\goto\infty
\eeqnn
as $n\goto\infty$. Then
 \beqnn
\bP_{\ep}\left\{\tau^-(\ep^{n})<\tau^+_b\wedge d_n \right\}\leq  \ep^{(a-1)(n-2)/2}.
 \eeqnn

By the Borel-Cantelli Lemma we
have
 \beqlb\label{survive_b}
\bP_\ep \Big\{\tau^-{(\ep^{n})}
<\tau^+_b\wedge d_n ~~ \text{i.o.} \Big\}=0.
 \eeqlb
Then $\bP_\ep$-a.s.,
 \beqnn
\tau^-{(\ep^{n})}
\ge\tau^+_b\wedge d_n
 \eeqnn
for $n$ large enough.

Now if there are infinitely many $n$ so that
 \beqlb\label{survive_a}
\tau^-{(\ep^{n})}\ge d_n,
 \eeqlb
then we have $\tau^-_0=\infty$; on the other hand,
 if \eqref{survive_a} holds for at most finitely many $n$, then by (\ref{survive_b}) we have
 $\tau_b^+<\tau^-{(\ep^{n})}$  for all $n$ large enough.
Combining these two cases,
\begin{equation}\label{survive_c}
\bP_\ep\{ \tau^-_0=\infty \,\, \text{or}\,\, \tau^+_b<\tau^-_0   \}=1.
\end{equation}
It follows from the Markov property and lack of negative jumps for $X$ that  if $\bE_\ep[\e^{-\lambda\tau^-_0 }; \, \tau^-_0<\infty]>0$ for $\lambda>0$, then
\begin{equation*}
\begin{split}
\bE_\ep[\e^{-\lambda\tau^-_0 }; \, \tau^-_0<\infty]&=\bE_\ep[\e^{-\lambda\tau^-_0 }; \, \tau^+_b<\tau^-_0<\infty]\\
&\leq \bE_\ep[\e^{-\lambda\tau^+_b }; \, \tau^+_b<\tau^-_0]\bE_b[\e^{-\lambda\tau^-_\ep }; \, \tau^-_\ep<\infty]\\
&\quad\quad\quad\times\bE_\ep[\e^{-\lambda\tau^-_0 }; \, \tau^-_0<\infty]\\
&<\bE_\ep[\e^{-\lambda\tau^-_0 }; \, \tau^-_0<\infty],
\end{split}
\end{equation*}
where we need (\ref{survive_c}) for the first equation.
Therefore, $\bE_\ep[\e^{-\lambda\tau^-_0 }; \, \tau^-_0<\infty]=0$ and consequently, $\bP_\ep\{\tau^-_0<\infty\}=0$.

One can also find  similar arguments in the proof of
Theorem 4.2.2 in Le (2014) and the proof of Theorem 2.8(2) in Le and Pardoux (2015).
%\qed

%\noindent{\it Proof of Theorem \ref{t2.6}.}
(ii) Given  $0<\de< \frac{1}{3-2a}$,  consider the martingale
 \beqnn
X^{1-a}_{t\wedge T}\exp\Big\{\int_0^{t\wedge T}G_{a}(X_s)\dd s\Big\}
 \eeqnn
for $T=\tau^-(\ep^{1+\delta})\wedge\tau^+(\ep^{1-\delta})  $.
By Lemma \ref{t3.3},
 \beqnn
\ep^{1-a}
 \ar\geq\ar
\bE_{\ep}\Big[ X^{1-a}
_{\tau^+(\ep^{1-\delta})}\exp\Big\{\int_0^{\tau^+(\ep^{1-\delta})}G_{a}(X_s)\dd s\Big\}1_{\{ \tau^+(\ep^{1-\delta})<\tau^-(\ep^{1+\delta})\}} \Big] \cr
 \ar\geq\ar
\ep^{(1-a)(1-\delta) }
\bP_{\ep}\big\{ \tau^+(\ep^{1-\delta})<\tau^-(\ep^{1+\delta})\big\}.
 \eeqnn
Then
\begin{equation}\label{extinctA}
\bP_{\ep}\big\{\tau^+(\ep^{1-\de})<\tau^-(\ep^{1+\de})\big\}\leq \ep^{(1-a)\delta}.
\end{equation}

Similarly,
\[\ep^{1-a}\geq \bE_{\ep}\Big[ X^{1-a}_t\exp\Big\{\int_0^t G_{a}(X_s)\dd s\Big\}1_{\{ \tau^+(\ep^{1-\delta})=\tau^-(\ep^{1+\delta})=\infty\}} \Big].\]
Letting $t\goto\infty$ we have
\begin{equation}\label{extinctB}
\bP_{\ep}\big\{\tau^+(\ep^{1-\de})=\tau^-(\ep^{1+\de})=\infty\big\}=0.
\end{equation}

By Lemma \ref{t3.3}   again, for $t(\ep):=[-(1-\de)\ln\ep]^{1-r}$ we have
 \beqnn
\ep^{1-a}
 \ar\geq\ar
\bE_{\ep}\Big[ X^{1-a}_{\tau^-(\ep^{1+\delta})}\exp\Big\{
\int_0^{\tau^-(\ep^{1+\delta})} G_{a}(X_s)\dd s\Big\} 1_{\{t(\ep)<\tau^-(\ep^{1+\delta})<\tau^+(\ep^{1-\delta}) \}}\Big] \cr
 \ar\geq\ar
\ep^{(1-a)(1+\delta)}\bE_{\ep}\Big[ \e^{ [-(1-\de)\ln\ep]^r t(\ep)}
1_{\{t(\ep)<\tau^-(\ep^{1+\delta})<\tau^+(\ep^{1-\delta}) \}} \Big] \cr
% \ar=\ar
%\ep^{(1-a)(1+\delta)}\e^{[-(1-\de)\ln\ep] }   \bP_{\ep}\Big\{t(\ep)<\tau^-(\ep^{1+\delta})<\tau^+(\ep^{1-\delta}) \Big\} \cr
 \ar=\ar
\ep^{(1-a)(1+\delta)} \ep^{-(1-\de)}\bP_{\ep}\Big\{t(\ep)<\tau^-(\ep^{1+\delta})<\tau^+(\ep^{1-\delta}) \Big\}.
 \eeqnn
Then
 \beqlb\label{extinctC}
\quad\bP_{\ep}\Big\{t(\ep)<\tau^-(\ep^{1+\delta})<\tau^+(\ep^{1-\delta})\Big\}
\leq  \ep^{(a-1)\de}\ep^{1-\de} = \ep^{1+(a-2)\de}.
 \eeqlb
%By (\ref{explosioa}),
% \beqnn
%\bP_{\ep}\Big\{t(\ep)\vee \tau^+(\ep^{1-\delta})<\tau^-(\ep^{1+\delta})\Big\}
%\leq\bP_{\ep}\big\{\tau^+(\ep^{1-\delta})<\tau^-(\ep^{1+\delta})\big\}
%\leq \ep^{(1-a)\delta}.
% \eeqnn
Combining (\ref{extinctA}), (\ref{extinctB}) and (\ref{extinctC}),  we have
\beqnn  \bP_{\ep}\{\tau^-(\ep^{1+\delta})>t(\ep)\}\leq \ep^{1+(a-2)\de}+\ep^{(1-a)\delta}<2\ep^{(1-a)\delta}.\eeqnn

By the strong Markov property and lack of negative jumps for process $X$,
 \beqnn
 \ar\ar
\bP_\ep\Big\{\bigcap_{n=0}^m\big\{\tau^-(\ep^{(1+\de)^n})<\infty, \, \tau^-(\ep^{(1+\de)^{n+1}})\circ\theta_{\tau^-(\ep^{(1+\de)^n})}
\leq t(\ep^{(1+\de)^n}) \big\}\Big\} \cr
 \ar\ar\quad=
\prod_{n=0}^m \bP_{\ep^{(1+\de)^n}}\Big\{\tau^-(\ep^{(1+\de)^{n+1}})\leq t(\ep^{(1+\de)^n}) \Big\}
\ge\prod_{n=0}^m \Big[1-2\ep^{(1+\de)^n(1-a)\de}\Big] \cr
\ar\ar\quad\ge\prod_{n=0}^m \e^{-4\ep^{(1+\de)^n (1-a)\de}}
\ge \e^{-8\ep^{(1-a)\delta}}.
 \eeqnn
Letting $m\goto \infty$ we have
 \beqnn
 \ar\ar
\bP_\ep\Big\{\bigcap_{n=0}^\infty\Big\{\tau^-(\ep^{(1+\de)^n})<\infty,  \cr
 \ar\ar\qquad\qquad
\tau^-(\ep^{(1+\de)^{n+1}})\circ\theta_{\tau^-(\ep^{(1+\de)^n})}
\leq t(\ep^{(1+\de)^n}) \Big\}\Big\}\ge \e^{-8\ep^{(1-a)\delta}}.
 \eeqnn

Since under $\bP_\ep$,
\beqnn  \tau^-_{0-}=\sum_{n=0}^\infty \tau^-(\ep^{(1+\de)^{n+1}})\circ\theta_{\tau^-(\ep^{(1+\de)^n})}, \eeqnn
then
\beqnn  \bP_\ep\Big\{\tau^-_{0-}\leq \sum_{n=0}^\infty t(\ep^{(1+\de)^n}) \Big\}\ge\e^{-8\ep^{(1-a)\delta}}.\eeqnn
Notice that for $\ep_n:=\ep^{(1+\de)^n}$,
 \beqnn
\sum_{n=1}^\infty t(\ep_n)=\sum_{n=1}^\infty \big[(\de-1)\ln\ep_n\big]^{1-r}
=\sum_{n=1}^\infty \big[(1+\de)^n(\de-1)\ln\ep\big]^{1-r}<\infty,
 \eeqnn
we thus have
 \beqnn
\bP_\ep\left\{\tau^-_{0-}< \infty \right\}\ge\e^{-8\ep^{(1-a)\delta}}.
 \eeqnn
By the definition of solution to SDE (\ref{sdeB}) at the  beginning of Section 2,  we have
 \beqlb\label{2.12}
\bP_\ep\left\{\tau^-_0=\tau^-_{0-}<\infty \right\}\ge\e^{-8\ep^{(1-a)\delta}},
 \eeqlb
which finishes the proof.
\qed

\noindent{\it Proof of Theorem \ref{t3.1}.}
(i) In the present proof, for small enough $b^{-1}$ and $\ep$ satisfying  $0<b<\ep^{-1}$ and for $n=2,3,\ldots$, let $T_n:=\tau^-({b})\wedge\tau^+(\ep^{-n})$.
By Lemma \ref{t3.3} we have
\beqnn
 \ep^{a-1}
 \ar\geq\ar
\bE_{\ep^{-1}}\Big[ X^{1-a}_{\tau^+(\ep^{-n})\wedge\tau^-({b})}
\exp\Big\{-(\ln \ep^{-n})^r
(\tau^+(\ep^{-n})\wedge\tau^-({b})) \Big\}\Big] \cr
 \ar\geq\ar
\bE_{\ep^{-1}} \Big[X^{1-a}_{\tau^+(\ep^{-n})}
\exp\big\{-(\ln \ep^{-n})^r d_n \big\}1_{\{\tau^+(\ep^{-n})<\tau^-({b})\wedge d_n \}}\Big] \cr
 \ar\geq\ar
\ep^{(a-1)n} \bE_{\ep^{-1}} \Big[\exp\{\ln \ep^{(1-a)n/2}\} 1_{\{\tau^+(\ep^{-n})<\tau^-({b})\wedge d_n \}}\Big]
 \eeqnn
for $b$ and $\ep^{-1}$ large enough, where
 \beqnn
d_n :=\frac{(1-a)n\ln\ep^{-1}}{2(\ln\ep^{-n})^r}
=\frac{(1-a)n^{1-r}}{2}(\ln\ep^{-1})^{1-r}\goto\infty
 \eeqnn
as $n\goto\infty$. Then
\beqnn  \bP_{\ep^{-1}}\Big\{\tau^+_{\ep^{-n}}<\tau^-({b})\wedge d_n \Big\}\leq  \ep^{(1-a)(n-2)/2}\eeqnn
for large enough $b$ and $\ep^{-1}$.
The desired result of part (i) then follows from an argument similar to that in the proof for Theorem \ref{t2.5} (i).
%\qed

%\noindent{\it Proof of Theorem \ref{t3.1}.}
(ii)
Taking $T:=\tau^-(\ep^{-1+\delta})\wedge\tau^+(\ep^{-1-\delta})$ in
Lemma \ref{t3.3}, we get
 \beqnn
\ep^{a-1}
 \ar\geq\ar
\bE_{\ep^{-1}}\Big[ X^{1-a}
_{\tau^-(\ep^{-1+\delta})}\exp\Big\{\int_0^{\tau^-(\ep^{-1+\delta})}G_{a}(X_s)\dd s\Big\} \cr
 \ar\ar\qqquad\qqquad\qquad~\times
1_{\{ \tau^-(\ep^{-1+\delta})<\tau^+(\ep^{-1-\delta})\}} \Big]\cr
 \ar\geq\ar
\ep^{(a-1)(1-\delta) }
\bP_{\ep^{-1}}\Big\{ \tau^-(\ep^{-1+\delta})<\tau^+(\ep^{-1-\delta})\Big\}.
 \eeqnn
Then
\begin{equation}\label{explosion_a'}
\bP_{\ep^{-1}}\Big\{\tau^-(\ep^{-(1-\de)})<\tau^+(\ep^{-(1+\de)})\Big\}
\leq \ep^{(a-1)\delta}.
\end{equation}
  Similarly,
\begin{equation*}
\begin{split}
\ep^{a-1}
&\geq\bE_{\ep^{-1}}\Big[ X^{1-a}_{n}\exp\Big\{\int_0^{n}G_{a}(X_s)\dd s\Big\}  1_{\{ \tau^-(\ep^{-1+\delta})\wedge\tau^+(\ep^{-1-\delta})> n\}} \Big]\\
&\geq  \ep^{(1+\delta)(a-1)}\e^{n \ep^{-1+\delta}} \bP_{\ep^{-1}}\{\tau^-(\ep^{-1+\delta})\wedge\tau^+(\ep^{-1-\delta})> n\}.
\end{split}
\end{equation*}
Letting $n\goto\infty$ we have
\begin{equation}\label{explosion_b'}
\bP_{\ep^{-1}}\Big\{\tau^-(\ep^{-(1-\de)})=\tau^+(\ep^{-(1+\de)})=\infty\Big\}=0.
\end{equation}

Let $t(y):=(\ln y^{1-\de})^{1-r}$ for $y>1$ and small enough $\delta$.
With $T$ replaced by
$t(\ep^{-1})\wedge\tau^-(\ep^{-1+\delta})\wedge\tau^+(\ep^{-1-\delta})$,
similar to the above argument we get
 \beqnn
\ep^{a-1}
 \ar\geq\ar
\bE_{\ep^{-1}}\Big[ X^{1-a}_{t(\ep^{-1})}\exp\Big\{
\int_0^{t(\ep^{-1})} G_{a}(X_s)\dd s\Big\}  \cr
 \ar\ar\qqquad\qquad\times1_{\{t(\ep^{-1})<\tau^+(\ep^{-(1+\delta)})<\tau^-(\ep^{-(1-\delta)}) \}}\Big] \cr
 \ar\geq\ar
\ep^{(a-1)(1+\delta)}\bE_{\ep^{-1}}\Big[\e^{((\de-1)\ln\ep)^r t(\ep^{-1}) } 1_{\{t(\ep^{-1})<\tau^+(\ep^{-(1+\delta)})<\tau^-(\ep^{-(1-\delta)}) \}}
 \Big] \cr
 \ar=\ar
\ep^{(a-1)(1+\delta)}\bE_{\ep^{-1}}\left[\e^{(\de-1)\ln\ep} 1_{\{t(\ep^{-1})<\tau^+(\ep^{-(1+\delta)})<\tau^-(\ep^{-(1-\delta)}) \}}
 \right].
 \eeqnn
Then
 \begin{equation}\label{explosion_c'}
 \begin{split}
&\bP_{\ep^{-1}}\Big\{t(\ep^{-1})<\tau^+(\ep^{-(1+\delta)})
<\tau^-(\ep^{-(1-\delta)})\Big\}\\
&\leq  \ep^{(1-a)\delta} \e^{-(\de-1)\ln\ep}= \ep^{1-a\de}.
\end{split}
 \end{equation}
Combining (\ref{explosion_a'}), (\ref{explosion_b'}) and (\ref{explosion_c'}), we have
% \beqnn
% \ar\ar
%\bP_{\ep^{-1}}\Big\{t(\ep^{-1})\vee \tau^-(\ep^{-(1-\delta)})<\tau^+(\ep^{-(1+\delta)})\Big\} \cr
% \ar\ar\quad\leq
%\bP_{\ep^{-1}}\Big\{ \tau^-(\ep^{-(1-\delta)})<\tau^+(\ep^{-(1+\delta)})\Big\}\leq \ep^{(a-1)\delta}.
% \eeqnn
%Therefore,
 \begin{equation}\label{explosion_c}
\bP_{\ep^{-1}}\big\{\tau^+(\ep^{-(1+\delta)})>t(\ep^{-1})\big\}\leq 2\ep^{(a-1)\delta}.
 \end{equation}

Write $\tilde{\tau}_0:=0$ and $\tilde{\tau}_{n+1}:=\tilde{\tau}^+((X_{\tilde{\tau}_n}\vee 1)^{1+\de})\circ\tilde{\tau}_n$ for $ n=0,1,2, \ldots$
with the convention $X_{\infty}=0$.
 Notice that $X$ allows possible positive jumps, and under $\bP_{\ep^{-1}}$ for $n\geq 1$, $X_{\tilde{\tau}_n}\geq\ep^{-(1+\de)^n}$ if $\tilde{\tau}_n<\infty$.

Observe that under $\bP_{\ep^{-1}}$, if $\tilde{\tau}_n<\infty$ for all $n\geq 1$, then
 \beqnn
\sum_{n=1}^\infty t(X_{\tilde{\tau}_n})
\leq\sum_{n=1}^\infty \big[\ln \ep^{-(1+\de)^n(1-\de)} \big]^{1-r}
=\sum_{n=1}^\infty \big[(1+\de)^n(\de-1)\ln\ep\big]^{1-r}<\infty.
 \eeqnn
By the strong Markov property and estimate (\ref{explosion_c}), we can show that
 \beqnn
\bP_{\ep^{-1}}\{\tilde{\tau}^+_\infty<\infty \}
 \ar=\ar
\bP_{\ep^{-1}}\big\{\lim_n\tilde{\tau}_n<\infty\big\} \cr
 \ar\geq\ar
\bP_{\ep^{-1}}\big\{ \tilde{\tau}_{n+1}<t(X_{\tilde{\tau}_{n}}) \quad\text {for all}\quad n\geq 1 \big\} \cr
 \ar\geq\ar
\prod_{n=1}^\infty \Big[1-2\ep^{(a-1)\delta(1+\de)^n}\Big] >0.
 \eeqnn
This finishes the proof.
\qed

%\noindent{\it Proofs of Corollaries \ref{t1.12} and \ref{explosion}.}
%Similar to these of Corollaries \ref{thm_survive} and \ref{thm_extinction},
%it is easy to finish the proofs by  Theorem \ref{t3.1}.
%\qed

%\subsection{\red Proof of Theorem \ref{comingdown} and Corollaries \ref{comingdown_cor1} and \ref{comingdown_cor2} }  ~~\\

\noindent{\it Proof of Theorem \ref{comingdown}.}
To show part (i), for any constants $d>0$ and $b>0$ such that (\ref{condition_stay}) holds for all $u>b$,
for any $0<\ep<b^{-1}$, we have
\begin{equation}\label{stay_infinity}
\begin{split}
&\bP_{\ep^{-1}}\{\tau^-_b<d\}\\
&\leq \bP_{\ep^{-1}}\{\tau^-_b<d\wedge \tau^+(\ep^{-2})\}\\
&\quad+\sum_{n=1}^\infty \bP_{\ep^{-1}}\left\{\tau^+(\ep^{-2^n})<\tau^-_b<d, \, \sup_{0\leq s\leq\tau^-_b}X_s\in[\ep^{-2^n},  \ep^{-2^{n+1}})  \right\}\\
&\leq \bP_{\ep^{-1}}\{\tau^-_b<d\wedge \tau^+(\ep^{-2})\}\\
&\quad+\sum_{n=1}^\infty \bP_{\ep^{-1}}\left\{\tau^+(\ep^{-2^n})<\tau^-_b,  \right. \\
&\qquad\qquad\left.\, \tau^-_b\circ\theta(\tau^+(\ep^{-2^n}))
<d\wedge \tau^+(\ep^{-2^{n+1}})\circ\theta(\tau^+(\ep^{-2^n}))\right\}\\
&\leq \bP_{\ep^{-1}}\{\tau^-_b<d\wedge \tau^+(\ep^{-2})\}\\
&\quad+\sum_{n=1}^\infty \bE_{\ep^{-1}}\left\{ 1_{\{\tau^+(\ep^{-2^n})<\tau^-_b\}}\bP_{X_{\tau^+(\ep^{-2^n})}}\left\{\tau^-_b<d\wedge \tau^+(\ep^{-2^{n+1}}) \right\}\right\}.
\end{split}
\end{equation}

 By Lemma \ref{t3.3},
for $\ep^{-2^n}\leq x<\ep^{-2^{n+1}}$,
$T:=\tau^-_b\wedge \tau^+(\ep^{-2^{n+1}})\wedge d$ and $n=0, 1,2,\ldots$,
\begin{equation*}
\begin{split}
x^{1-a}
&\geq \bE_x X^{1-a}_{T}\exp\left\{\int_0^T G_a(X_s)\dd s\right\}\\
&\geq \bE_x X^{1-a}_{T}\exp\left\{-\int_0^T (\ln(X_s))^r \dd s\right\}\\
&\geq b^{1-a} \bE_x \left[\exp\{-d(\ln \ep^{-2^{n+1}})^r\}; \,   \tau^-_b< \tau^+(\ep^{-2^{n+1}})\wedge d\right]\\
&= b^{1-a} \bE_x \left[\exp\{-d(2^{n+1}\ln \ep^{-1})^r\}; \,   \tau^-_b< \tau^+(\ep^{-2^{n+1}})\wedge d\right].\\
\end{split}
\end{equation*}
Then
\begin{equation*}
\begin{split}
\bP_x \{\tau^-_b< \tau^+(\ep^{-2^{n+1}})\wedge d\}&\leq  b^{a-1}\exp\{(1-a)2^n \ln \ep^{-1}+d(2^{n+1}\ln \ep^{-1})^r \}\\
&\leq b^{a-1}\e^{(1-a)2^{n-1} \ln \ep^{-1}}= b^{a-1}\ep^{(a-1)2^{n-1}}.
\end{split}
\end{equation*}
for all small enough $\ep>0$.
It follows from (\ref{stay_infinity}) and the strong Markov property that
\[\bP_{\ep^{-1}}\{\tau^-_b<d\}\leq  b^{a-1}\sum_{n=0}^\infty \ep^{(a-1)2^{n-1}},\]
which goes to $0$ as $\ep\goto 0+$. Since $b>0$ and $d>0$ are arbitrary, the process $X$ thus  stays at infinity.

We now proceed to show the  part (ii).
Write $t(x):=(1+\de)^r(\ln x)^{1-r}$ for $x>1$. Then  by Lemma \ref{t3.3}, for large $x$,
\begin{equation*}
\begin{split}
&x^{1-a}\\
&\geq \bE_x \left\{X^{1-a}_{\tau^+(x^{(1+\de)})}
\exp\left\{\int_0^{\tau^+(x^{(1+\de)})}G_a(X_s)\dd s \right\}1_{\{\tau^-(x^{(1+\de)^{-1}})>\tau^+(x^{1+\de})\}}\right\} \\
&\geq x^{(1-a)(1+\de)}\bP_x \{\tau^-(x^{(1+\de)^{-1}})>\tau^+(x^{1+\de})\}.
\end{split}
\end{equation*}
Then
\begin{equation}\label{bound1}
\bP_x \{\tau^-(x^{(1+\de)^{-1}})>\tau^+(x^{1+\de})\}\leq x^{-\de(1-a)}.
\end{equation}
By condition (\ref{comingdown_a}) we also have
\begin{equation*}
\begin{split}
&x^{1-a}\\
&\geq \bE_x \left\{X^{1-a}_{\tau^-(x^{(1+\de)^{-1}})}
\exp\left\{\int_0^{\tau^-(x^{(1+\de)^{-1}})}G_a(X_s)\dd s \right\}\right\}\\
&\quad\quad\quad \times 1_{\{t(x)<\tau^-(x^{(1+\de)^{-1}})<\tau^+(x^{1+\de})\}}\\
&\geq x^{(1-a)/(1+\de)}\bE_x\left\{\exp\left\{\int_0^{t(x)}(\ln X_s)^r\dd s \right\}1_{\{t(x)<\tau^-(x^{(1+\de)^{-1}})<\tau^+(x^{1+\de})\}}\right\} \\
&\geq x^{(1-a)/(1+\de)} \e^{(1+\de)^{-r}(\ln x)^rt(x)}\bP_x\{t(x)<\tau^-(x^{(1+\de)^{-1}})<\tau^+(x^{1+\de})\}\\
&= x^{(1-a)/(1+\de)} x\bP_x\{t(x)<\tau^-(x^{(1+\de)^{-1}})<\tau^+(x^{1+\de})\}.\\
\end{split}
\end{equation*}
It follows that
\begin{equation}\label{bound2}
\bP_x\{t(x)<\tau^-(x^{(1+\de)^{-1}})<\tau^+(x^{1+\de})\}\leq x^{\frac{\de(1-a)}{1+\de}-1}= x^{-(1+\de a)/(1+\de)}.
\end{equation}
Combining (\ref{bound1}) and (\ref{bound2}) we have
\[\bP_x\{t(x)<\tau^-(x^{(1+\de)^{-1}})\}\leq  x^{-(1+\de a)/(1+\de)}+x^{-\de(1-a)}\leq 2x^{-\de(1-a)}\]
for small enough $\de>0$. Then for $b\equiv b(\de)$ large enough, by the strong Markov property
\begin{equation*}
\begin{split}
&\bP_{b^{(1+\de)^m}}\left\{\cap_{n=1}^m \left\{\tau^-(b^{(1+\de)^n})<\infty,  \tau^-(b^{(1+\de)^{n-1}})\circ \theta_{\tau^-(b^{(1+\de)^n})}\leq t(b^{(1+\de)^n}) \right\}  \right\}\\
&=\prod_{n=1}^m \bP_{b^{(1+\de)^n}}\left\{ \tau^-(b^{(1+\de)^{n-1}}) \leq t(b^{(1+\de)^n})\right\} \\
&\geq \prod_{n=1}^m \left(1-2b^{-\de(1-a)(1+\de)^{n-1}}  \right)\geq \prod_{n=1}^m \e^{-4b^{-\de(1- a)(1+\de)^{n-1}}}\\
&=\e^{-4\sum_{n=1}^m b^{-\de(1- a)(1+\de)^{n-1}}}\geq \e^{-8b^{-\de(1- a)}}.
\end{split}
\end{equation*}
Let $m\goto\infty$. Then
\begin{equation}\label{comedown_a}
\begin{split}
&\lim_{x\to\infty}\bP_x \left\{\tau^-_b \leq \sum_{n=1}^\infty t(b^{(1+\de)^n})=(1+\de)^r(\ln b)^{1-r}\sum_{n=1}^\infty (1+\de)^{(1-r)n}<\infty\right\}\\
&\geq \e^{-8b^{-\de(1- a)}}
\end{split}
\end{equation}
for $r>1$. Letting $b\goto\infty$ in (\ref{comedown_a}), we obtain the limit (\ref{comedown_def}) and
the process $X$ comes down from infinity.
\qed

%\noindent{\it Proofs of Corollaries \ref{comingdown_cor1} and \ref{comingdown_cor2}.}
%Similar to these of Corollaries \ref{thm_survive} and \ref{thm_extinction},
%it is easy to finish the proofs by  Theorem \ref{comingdown}.
%\qed

\section*{Acknowledgements}
The authors are grateful to an anonymous referee and an associate editor for very helpful comments.
Pei-Sen Li thanks Concordia University where part of the work on this paper was carried out during his visit as a postdoctoral fellow.
%And this is an acknowledgements section with a heading that was produced by the
%$\backslash$section* command. Thank you all for helping me writing this
%\LaTeX\ sample file. See \ref{suppA} for the supplementary material example.

%\noindent{\bf Acknowledgement}

\end{document}